\theoremstyle{plain}
\newtheorem{thm}{Theorem}[section]
\newtheorem{lem}[thm]{Lemma}
\newtheorem{prop}[thm]{Proposition}
\theoremstyle{definition}
\newtheorem{ex}[thm]{Example}
\newtheorem{rmk}[thm]{Remark}
\newcommand{\surj}{\twoheadrightarrow}
\newcommand{\Gal}{{\rm Gal}}
\newcommand{\sC}{{\mathcal C}}
\newcommand{\F}{{\mathbb F}}
\newcommand{\Q}{{\mathbb Q}}
\newcommand{\U}{{\mathbb U}}
\newcommand{\Z}{{\mathbb Z}}
\newcommand{\res}{{\text{\sf res}\hspace{.1ex} }}
\newcommand*{\QEDB}{\hfill\ensuremath{\square}}
\def\NDT{{\fontencoding{T5}\selectfont Nguy\~ \ecircumflex n Duy T\^an}}
\begin{document}
\title{Construction of unipotent Galois extensions and Massey products }
\begin{abstract} 
For all primes $p$ and for all fields, we find a sufficient and necessary condition of the existence of a unipotent Galois extension of degree $p^6$. The main goal of this paper is to describe an explicit construction of such a Galois extension over fields  admitting such a Galois extension.
This construction is surprising in its simplicity and generality.  
The problem of finding such a construction has been left open since 2003. Recently a possible solution of this problem gained urgency because of an effort to extend   new advances in Galois theory and its relations with Massey products in Galois cohomology.
\end{abstract} 
\dedicatory{Dedicated to Alexander Merkurjev}
 \author{ J\'an Min\'a\v{c} and \NDT}
\address{Department of Mathematics, Western University, London, Ontario, Canada N6A 5B7}
\email{minac@uwo.ca}
\address{Department of Mathematics, Western University, London, Ontario, Canada N6A 5B7 and Institute of Mathematics, Vietnam Academy of Science and Technology, 18 Hoang Quoc Viet, 10307, Hanoi - Vietnam } 
\email{duytan@math.ac.vn}
\thanks{JM is partially supported  by the Natural Sciences and Engineering Research Council of Canada (NSERC) grant R0370A01. NDT is partially supported  by the National Foundation for Science and Technology Development (NAFOSTED) grant 101.04-2014.34}

\maketitle
\section{Introduction}
From the very beginning of the invention of Galois theory, one problem has emerged. For a given finite group $G$, find a Galois extension $K/\Q$ such that ${\rm Gal}(K/\Q)\simeq G$. This is still an open problem in  spite of the great efforts of a number of mathematicians and substantial progress having been made with specific groups $G$. (See \cite{Se3}.)
A more general problem is to ask the same question over other base fields $F$. This is a challenging and difficult problem even for groups G of prime power order.

In this paper we make  progress on this classical problem in Galois theory. Moreover this progress fits together well with a new development relating Massey products in Galois cohomology to basic problems in Galois theory. 
For all primes $p$ and all fields in the key test case of $n=4$, we construct Galois extensions with the unipotent Galois group $\U_n(\F_p)$ assuming only the existence of some Galois extensions of order $p^3$. 
  This fits into a program outlined in \cite{MT1} and \cite{MT2}, for the systematic construction of Galois $p$-closed extensions of general fields, assuming only knowledge of Galois extensions of degree less than or equal to $p^3$ and the structure of $p$th power roots of unity in the base field.
Thus both the methods and the  results in this paper pave the way to a program for obtaining  the structure of maximal pro-$p$-quotients of absolute Galois groups for all fields. We shall now describe some previous work of a number of mathematicians which has influenced our work, as well as its significance for further developments and applications.

Recently  there has been   substantial progress in Galois cohomology which has changed our perspective on Galois $p$-extensions over general fields. In some remarkable work, M. Rost and V. Voevodsky proved the Bloch-Kato conjecture on the structure of  Galois cohomology of general fields. (See \cite{Voe1,Voe2}.) 
From this work it follows that there must be enough Galois extensions to make higher degree Galois cohomology decomposable. However the explicit construction of such Galois extensions is completely mysterious. 
In \cite{MT1}, \cite{MT2} and \cite{MT5},  two new conjectures, the Vanishing $n$-Massey Conjecture and the Kernel $n$-Unipotent Conjecture were proposed. 
These conjectures in \cite{MT1} and \cite{MT2}, and the results in this paper, lead to a program of constructing these previously mysterious Galois extensions in a systematic way.
In these papers it is shown that the truth of these conjectures has some significant implications on the structure of absolute Galois groups.
These conjectures are based on a number of previous considerations. One motivation comes from topological considerations. (See \cite{DGMS} and \cite{HW}.) Another motivation is a program to describe various $n$-central series of absolute Galois groups as kernels of simple Galois representations. (See \cite{CEM, Ef, EM1,EM2, MSp,NQD,Vi}.)
If the Vanishing $n$-Massey Conjecture is true, then by a result in \cite{Dwy}, we obtain a program of building up $n$-unipotent Galois representations of absolute Galois groups  by induction on $n$. This is an attractive program because we  obtain a procedure of constructing larger Galois $p$-extensions from smaller ones, efficiently  using  the fact that certain {\it a priori} natural cohomological obstructions to this procedure  always vanish.

Recall that for each natural number $n$, $\U_n(\F_p)$ is the group of upper triangular $n\times n$-matrices with entries in $\F_p$ and diagonal entries 1. Then $\U_3(\F_2)$ is isomorphic to the  dihedral group of order 8, and if $p$ is odd, then $\U_3(\F_p)$ is isomorphic to the Heisenberg group $H_{p^3}$ of order $p^3$. For all $n\geq 4$ and all primes $p$, we can think of $\U_n(\F_p)$ as "higher Heisenberg groups" of order $p^{n(n-1)/2}$.
 It is now recognized that these groups play a very special role in  current Galois theory. Because $\U_n(\F_p)$ is a Sylow $p$-subgroup of ${\rm GL}_n(\F_p)$, and every finite $p$-group has a faithful linear $n$-dimensional representation   over $\F_p$, for some $n$, we see that every finite $p$-group can be embedded into $\U_n(\F_p)$ for some $n$. Besides, the Vanishing $n$-Massey Conjecture and the Kernel $n$-Unipotent Conjecture also indicate some deeper reasons why $\U_n(\F_p)$ is of special interest. 
 The constructions of Galois extensions with the Galois group $\U_3(\F_p)$ over fields which admit them, are well-known in the case when the base field is of characteristic not $p$. They are an important basic  tool in the Galois theory of $p$-extensions. (See for example \cite[Sections 6.5 and 6.6]{JLY}. Some early papers related to these topics like 
\cite{MNg} and \cite{M} now belong to classical work on Galois theory.)
 
  In \cite[Section 4]{GLMS},  a construction of Galois extensions $K/F$, ${\rm char}(F)\not=2$, with ${\rm Gal}(K/F)\simeq \U_4(\F_2)$, was discovered. Already at that time, one reason for searching for this construction was the motivation to find ideas to extend deep results on the characterization of  the fixed field  of the third 2-Zassenhaus filtration of an absolute Galois group $G_F$ as the compositum of Galois extensions of degree at most 8 (see \cite{Ef, EM2, MSp,Vi}), to a similar characterization of the fixed field of the fourth 2-Zassenhaus filtration of $G_F$. 
 In retrospect, looking at this construction, one recognizes some elements of the basic theory of Massey products. However at that time the authors of \cite{GLMS} were not familiar with Massey products. It was realized that such a construction would also be desirable  for $\U_4(\F_p)$ for all $p$ rather than $\U_4(\F_2)$, but none has been found until now. 
 
 In \cite{GLMS}, in the construction of a Galois field extension $K/F$ with ${\rm Gal}(K/F)\simeq \U_4(\F_2)$, a simple criteria    was used  for an element in $F$ to be a norm  from a bicyclic  extension of degree 4 modulo non-zero squares in the base field $F$.  However  in \cite{Me},  A. Merkurjev showed that a straightforward generalization of this criteria for $p$ odd instead of $p=2$, is not true in general. 
Therefore it was not clear whether such an \mbox{analogous} construction of Galois extensions $K/F$ with ${\rm Gal}(K/F)\simeq \U_4(\F_p)$ was possible for $p$ odd.
 
 On the other hand, a new consideration in \cite{HW}, \cite{MT1} and \cite{MT2} led us to formulate the Vanishing $n$-Massey Conjecture, and the most natural way to prove this conjecture for $n=3$ in the key non-degenerate case would be through constructing explicit Galois $\U_4(\F_p)$-extensions. 
 In fact we pursued both cohomological variants of proving the Vanishing 3-Massey Conjecture and the Galois theoretic construction of Galois $\U_4(\F_p)$-extensions. 
 
 The story of proving this conjecture and finally constructing Galois $\U_4(\F_p)$-extensions over all fields which admit them, is interesting. 
 First M. J. Hopkins and K. G. \mbox{Wickelgren} in \cite{HW} proved a result which implies that the Vanishing 3-Massey Conjecture with respect to prime 2, is true for all global fields of characteristic not 2. In \cite{MT1} we proved that the result of \cite{HW} is valid for any field $F$. 
 At the same time, in \cite{MT1} the Vanishing $n$-Massey Conjecture was formulated, and applications on the structure of the  quotients of absolute Galois groups were deduced.  
In \cite{MT3} we proved that the Vanishing 3-Massey Conjecture with respect to any prime $p$ is true for any global field $F$ containing a primitive $p$-th root of unity. 
In \cite{EMa}, I. Efrat and E. Matzri provided alternative proofs for the above-mentioned  results in \cite{MT1} and \cite{MT3}.
In \cite{Ma}, E. Matzri  proved that for any prime $p$ and for any field $F$ containing a primitive $p$-th root of unity,  every defined triple Massey product contains 0. This established the Vanishing 3-Massey Conjecture in the form formulated in \cite{MT1}.  Shortly after \cite{Ma} appeared on the arXiv,  two new preprints, \cite{EMa2} and \cite{MT5}, appeared nearly simultaneously and independently on the arXiv as well.  
In \cite{EMa2}, I. Efrat and E. Matzri replace \cite{Ma} and provide a cohomological approach to the proof of the main result in \cite{Ma}. In \cite{MT5} we also provide a cohomological method of proving the same result. We also extend the vanishing of triple Massey products to all fields, and thus remove the restriction that the base field contains a primitive $p$-th root of unity. 
We further provide applications on the structure of  some canonical quotients of absolute Galois groups, and also  show that some special higher $n$-fold Massey products vanish. Finally in this paper we are able to provide a construction of the Galois $\U_4(\F_p)$-extension $M/F$ for any field $F$  which admits such an extension. 
We use this construction to provide a natural new proof, which we were seeking from the beginning of our search for a Galois theoretic proof, of the vanishing of triple Massey products over all fields. 

Some interesting cases of  "automatic" realizations of Galois groups  are known. These are  cases  when the existence of one Galois group over a given field forces the existence of some other Galois groups over this field.  (See for example \cite{Je, MS,MSS, MZ, Wh}.) However, nontrivial cases of automatic realizations coming from an actual construction of embedding  smaller Galois extensions to larger ones, are relatively rare, and they are difficult to produce. 
In our construction we are able,  from  knowledge of the existence of two Heisenberg Galois extensions of degree $p^3$ over a given base field $F$ as above, to find possibly another pair of Heisenberg Galois extensions whose compositum can be automatically embedded in a Galois $\U_4(\F_p)$-extension. (See also Remark~\ref{rmk:modification}.)
Observe that in all proofs of the Vanishing 3-Massey Conjecture we currently have, constructing Heisenberg Galois extensions of degree $p^3$ has  played an important role. For the sake of a possible inductive proof of the Vanishing $n$-Massey Conjecture, it seems  important to be able to  inductively construct Galois $\U_n(\F_p)$-extensions. This  has now been achieved for the induction step from $n=3$ to $n=4$, and it opens up a way to approach the Vanishing 4-Massey Conjecture. 

Another motivation for this work which combines well with the motivation described above, comes from anabelian birational considerations. Very roughly in various generality and precision, it was observed that small canonical quotients of absolute Galois groups determine surprisingly precise information about  base fields, in some cases  entire base fields up to isomorphisms. (See \cite{BT1,BT2,CEM,EM1,EM2,MSp,Pop}.) 
But these results suggest that some small canonical quotients of an absolute Galois group together with  knowledge of the roots of unity in the base field should determine larger canonical quotients of this absolute Galois group.
The Vanishing $n$-Massey Conjecture and the  Kernel $n$-Unipotent Conjecture, together with the program of explicit constructions of Galois $\U_n(\F_p)$-extensions, make this project more precise. Thus our main results, Theorems~\ref{thm:construction}, \ref{thm:construction char not p}, \ref{thm:construction char p} and~\ref{thm:U4}, contribute to this project.

A further potentially important application for this work is the theory of Galois $p$-extensions of global fields with restricted ramification and questions surrounding the Fontaine-Mazur conjecture. (See \cite{Ko}, \cite{La}, 
\cite{McL}, \cite{Ga},\cite{Se2}.) For example in \cite[Section 3]{McL}, there is a criterion for infinite Hilbert $p$-class field towers over quadratic imaginary number fields relying on the vanishing of certain triple Massey products. The explicit constructions in this paper should be useful for approaching  these classical number theoretic problems. 

Only relatively recently, the  investigations of the Galois realizability of some larger $p$-groups among families of small $p$-groups, appeared. (See the very interesting papers \cite{Mi1}, \cite{Mi2}, \cite{GS}.)
 In these papers the main concern is understanding cohomological and Brauer group obstructions for the realizability of Galois field extensions with prescribed Galois groups. In our paper the main concern is the explicit constructions and their connections with Massey products.
 In other recent papers \cite{CMS} and \cite{Sch}, the authors succeeded to treat the cases of characteristic equal to $p$ or not equal to $p$, nearly uniformly. This is also the case with  our paper.    

Our paper is organized as follows. In Section 2 we recall basic notions about norm residue symbols and Heisenberg extensions of degree $p^3$. (For convenience we think of  the dihedral group of order 8 as the Heisenberg group of order 8.) In Section 3 we provide a detailed construction of Galois $\U_4(\F_p)$-extensions beginning with two "compatible" Heisenberg extensions of degree $p^3$. Section 3 is divided into two subsections. In Subsection 3.1 we provide a construction of the required Galois extension $M/F$ over any field $F$ which contains a primitive $p$-th root of unity. In Subsection 3.2 we provide such a construction
for all fields of characteristic not $p$, building on the results and methods in Subsection 3.1. 
In Example~\ref{ex:p=2} we illustrate our method on a surprisingly simple construction of Galois $\U_4(\F_2)$-extensions over any field $F$ with ${\rm char}(F)\not=2$. 
In Section 4 we provide a required construction for all fields of characteristic $p$.
After the original and classical papers of E. Artin and O. Schreier \cite{ASch} and E. Witt \cite{Wi}, these constructions seem to add new results on the construction of basic Galois extensions $M/F$ with Galois groups $\U_n(\mathbb{F}_p)$, $n = 3$ and $n = 4$.  These are aesthetically pleasing constructions with remarkable simplicity.  They follow constructions in characteristic not $p$, but they are simpler. See also \cite[Section 5.6 and Appendix A1]{JLY} for another procedure to obtain these Galois extensions. 
In Section 5 we provide a new natural Galois theoretic proof of the vanishing of triple Massey products over all fields in the key non-degenerate case. We also complete the new proof of the vanishing of triple Massey products in the case when a primitive $p$-th root of unity is contained in the base field.
Finally we formulate a necessary and sufficient condition for the existence of a Galois $\U_4(\F_p)$-extension $M/F$ which contains an elementary $p$-extension of any field $F$ (described by three linearly independent characters), and we summarize the main results in Theorem~\ref{thm:U4}.
\\
\\
{\bf Acknowledgements: } We would like to thank  M. Ataei, L. Bary-Soroker, S. K. Chebolu, I. Efrat, H. \'{E}snault, E. Frenkel, S. Gille, J. G\"{a}rtner, P. Guillot, D. Harbater, M. J. Hopkins, Ch. Kapulkin, I. K\v{r}\'i\v{z}, J. Labute, T.-Y. Lam, Ch. Maire, E. Matzri,   C. McLeman, D. Neftin, J. Nekov\'a\v{r}, R. Parimala, C. Quadrelli, M. Rogelstad, A. Schultz, R. Sujatha, Ng. Q. {\fontencoding{T5}\selectfont Th\'\abreve{}ng}, A. Topaz, K. G. Wickelgren and O. Wittenberg
for having been able to share our enthusiasm for this relatively new subject of Massey products in Galois cohomology, and for their encouragement, support, and  inspiring discussions.   We are  very grateful to the  anonymous referee for his/her  careful reading of our paper, and for providing us with insightful comments and valuable suggestions which we used to improve our exposition.
\\
\\
{\bf Notation:} If $G$ is a group and $x,y\in G$, then $[x,y]$ denotes the commutator $xy x^{-1}y^{-1}$. 
For any element $\sigma$ of  finite order $n$ in $G$, we denote $N_{\sigma}$ to be  the element $1 +\sigma +\cdots +\sigma^{n-1}$ in the integral group ring $\Z[G]$ of G.

For a field $F$, we denote $F_s$ (respectively $G_F$) to be its separable closure (respectively its absolute Galois group ${\rm Gal}(F_s/F))$. We denote $F^\times$ to be the set of non-zero elements of $F$.
 For a given profinite group $G$, we call a Galois extension $E/F$, a (Galois) {\it $G$-extension} if the Galois group ${\rm Gal}(E/F)$ is isomorphic to $G$.
 
For a unital commutative ring $R$ and  an integer $n\geq 2$, we denote $\U_n(R)$ as the group of all upper-triangular unipotent $n\times n$-matrices with entries in $R$.  
For any (continuous) representation $\rho\colon G \to \U_n(R)$ from a (profinite) group $G$ to $\U_n(R)$ (equipped with discrete topology ), and $1\leq i< j\leq n$, let $\rho_{ij}\colon G \to R$ be the composition of $\rho$  with the projection from $\U_n(R)$ to its $(i,j)$-coordinate.

\section{Heisenberg extensions}
\label{sec:Heisenberg}
The materials in this section have been taken from \cite[Section 3]{MT5}.
\subsection{Norm residue symbols}
\label{subsec:norm residue}
Let $F$ be a field containing a primitive $p$-th root of unity $\xi$. 
For any element $a$ in $F^\times$, we shall write $\chi_a$  for the  character corresponding to $a$ via the Kummer map $F^\times\to H^1(G_F,\Z/p\Z)={\rm Hom}(G_F,\Z/pZ)$.  From now on we assume that $a$ is not in $(F^\times)^p$. The extension $F(\sqrt[p]{a})/F$ is a Galois extension with the Galois group $\langle \sigma_a\rangle\simeq \Z/p\Z$, where $\sigma_a$ satisfies $\sigma_a(\sqrt[p]{a})=\xi\sqrt[p]{a}$. 

The character $\chi_a$ defines a homomorphism $\chi^a\in {\rm Hom}(G_F,\frac{1}p\Z/\Z)\subseteq {\rm Hom}(G_F,\Q/\Z)$ by the formula
\[
\chi^a =\frac{1}{p} \chi_a. 
\]
Let $b$ be any element in $F^\times$.  Then the norm residue symbol may be defined as
\[
(a,b):= (\chi^a,b):= b\cup \delta \chi^a.
\]
Here $\delta$ is the coboundary homomorphism $\delta \colon H^1(G,\Q/\Z)\to H^2(G,\Z)$ associated to the short exact sequence of trivial $G$-modules
\[
0\to \Z\to \Q\to \Q/\Z\to 0.
\]

The cup product $\chi_a\cup \chi_b\in H^2(G_F,\Z/p\Z)$ can be interpreted as the norm residue symbol $(a,b)$. More precisely, we consider the exact sequence
\[
0\longrightarrow \Z/p\Z \longrightarrow  F_s^\times \stackrel{x\mapsto x^p}{\longrightarrow} F_s^\times \longrightarrow 1,
\]
where $\Z/p\Z$ has been identified with the group of $p$-th roots of unity $\mu_p$ via the choice of $\xi$. As $H^1(G_F,F_s^\times)=0$, we obtain
\[
0{\longrightarrow} H^2(G_F,\Z/p\Z)\stackrel{i}{\longrightarrow} H^2(G_F,F_s^\times) \stackrel{\times p}{\longrightarrow} H^2(G_F,F_s^\times).
\]
Then one has $i(\chi_a\cup \chi_b)=(a,b)\in H^2(G_F,F_s^\times)$. (See \cite[Chapter XIV, Proposition 5]{Se}.) 
\subsection{Heisenberg extensions}
\label{subsec:Heisenberg}
In this subsection we recall some basic facts about Heisenberg extensions. (See \cite[Chapter 2, Section 2.4]{Sha} and \cite[Sections 6.5 and 6.6 ]{JLY}.)
 
Assume that $a,b$ are elements in $F^\times$, which are linearly independent modulo $(F^\times)^p$. Let $K= F(\sqrt[p]{a},\sqrt[p]{b})$. Then $K/F$ is a Galois extension whose Galois group is generated by $\sigma_a$ and $\sigma_b$. Here $\sigma_a(\sqrt[p]{b})=\sqrt[p]{b}$, $\sigma_a(\sqrt[p]{a})=\xi \sqrt[p]{a}$; $\sigma_b(\sqrt[p]{a})=\sqrt[p]{a}$, $\sigma_b(\sqrt[p]{b})=\xi \sqrt[p]{b}$. 

We consider a map $\U_3(\Z/p\Z)\to (\Z/p\Z)^2$ which sends $\begin{bmatrix} 
1 & x & z\\
0 & 1 & y\\
0 & 0 & 1 
\end{bmatrix}$
to $(x,y)$. Then we have the following embedding problem
\[
 \xymatrix{
& & &G_F \ar@{->}[d]^{\bar\rho} \\
0\ar[r]& \Z/p\Z \ar[r] &\U_3(\Z/p\Z)\ar[r] &(\Z/p\Z)^2\ar[r] &1,
}
\]
where $\bar\rho$ is the map $(\chi_a,\chi_b)\colon G_F\to {\rm Gal}(K/F)\simeq (\Z/p\Z)^2$. (The last isomorphism ${\rm Gal}(K/F)\simeq (\Z/p\Z)^2$ is the one which sends $\sigma_a$ to $(1,0)$ and $\sigma_b$ to $(0,1)$.)

Assume that $\chi_a\cup \chi_b=0$. Then the norm residue symbol $(a,b)$ is trivial. Hence there exists $\alpha$ in $F(\sqrt[p]{a})$ such that $N_{F(\sqrt[p]{a})/F}(\alpha)=b$ (see  \cite[Chapter XIV, Proposition 4 (iii)]{Se}). We set 
\[
A_0=\alpha^{p-1} \sigma_a(\alpha^{p-2})\cdots \sigma_a^{p-2}(\alpha)=\prod_{i=0}^{p-2} \sigma_a^{i}(\alpha^{p-i-1}) \in  F(\sqrt[p]{a}).
\]
\begin{lem} Let $f_a$ be an element in $F^\times$. Let $A=f_aA_0$. Then we have
\label{lem:operator}
\[
\frac{\sigma_a(A)}{A}=\frac{N_{F(\sqrt[p]{a})/F}(\alpha)}{\alpha^p}=\frac{b}{\alpha^p}.
\]
%\QEDB
\end{lem}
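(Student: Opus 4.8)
The plan is to reduce the identity to a single direct computation of the ratio $\sigma_a(A_0)/A_0$ and then to recognize the answer as a norm. First I would observe that the multiplicative constant $f_a$ is irrelevant: since $\sigma_a$ restricts to the identity on $F$ and $f_a\in F^\times$, we have $\sigma_a(f_a)=f_a$, so
\[
\frac{\sigma_a(A)}{A}=\frac{f_a\,\sigma_a(A_0)}{f_a\,A_0}=\frac{\sigma_a(A_0)}{A_0}.
\]
Thus it suffices to treat the case $A=A_0$, and the second asserted equality $N_{F(\sqrt[p]{a})/F}(\alpha)=b$ is already granted by the choice of $\alpha$ made just before the statement.

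Next I would compute $\sigma_a(A_0)/A_0$ explicitly. Starting from $A_0=\prod_{i=0}^{p-2}\sigma_a^{i}(\alpha)^{p-i-1}$ and applying $\sigma_a$ factor by factor, the substitution $j=i+1$ gives
\[
\sigma_a(A_0)=\prod_{i=0}^{p-2}\sigma_a^{i+1}(\alpha)^{p-i-1}=\prod_{j=1}^{p-1}\sigma_a^{j}(\alpha)^{p-j}.
\]
I would then collect, in the quotient $\sigma_a(A_0)/A_0$, the total exponent of each factor $\sigma_a^{k}(\alpha)$. For $1\le k\le p-2$ the exponent is $(p-k)-(p-k-1)=1$; the factor $\sigma_a^{p-1}(\alpha)$ occurs only in the numerator, with exponent $1$; and $\sigma_a^{0}(\alpha)=\alpha$ occurs only in the denominator, with exponent $p-1$.

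The crucial point is to close the resulting telescoping by using that $\sigma_a$ has order $p$ on $F(\sqrt[p]{a})$, so that $\sigma_a^{p}(\alpha)=\alpha$. With the exponents just recorded, the quotient becomes
\[
\frac{\sigma_a(A_0)}{A_0}=\frac{\prod_{k=1}^{p-1}\sigma_a^{k}(\alpha)}{\alpha^{p-1}}=\frac{\prod_{k=0}^{p-1}\sigma_a^{k}(\alpha)}{\alpha^{p}},
\]
where in the last step I multiply top and bottom by $\alpha=\sigma_a^{0}(\alpha)$ to fill out the full orbit. The numerator is now exactly $N_{F(\sqrt[p]{a})/F}(\alpha)=\prod_{k=0}^{p-1}\sigma_a^{k}(\alpha)$, which equals $b$, giving the claim. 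The only genuine obstacle here is bookkeeping: keeping the two index ranges aligned and remembering to fold the lone factor $\sigma_a^{p-1}(\alpha)$ together with $\alpha^{-(p-1)}$ via $\sigma_a^{p}(\alpha)=\alpha$ so that the partial product becomes a complete norm. No deeper input beyond the existence of $\alpha$ with $N_{F(\sqrt[p]{a})/F}(\alpha)=b$ is required.
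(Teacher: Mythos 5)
Your proof is correct and is essentially the paper's own argument: the paper likewise first notes $\sigma_a(A)/A=\sigma_a(A_0)/A_0$ and then encodes exactly your exponent bookkeeping in the group-ring identity $(s-1)\sum_{i=0}^{p-2}(p-i-1)s^{i}=\sum_{i=0}^{p-1}s^{i}-ps^{0}$, which you have simply unfolded multiplicatively. (One small remark: the telescoping itself never needs $\sigma_a^{p}(\alpha)=\alpha$; that fact only enters in identifying $\prod_{k=0}^{p-1}\sigma_a^{k}(\alpha)$ with the norm.)
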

\begin{proof}Observe that $\dfrac{\sigma_a(A)}{A}=\dfrac{\sigma_a(A_0)}{A_0}$.
The lemma then follows from the identity
\[
(s-1)\sum_{i=0}^{p-2} (p-i-1)s^{i} = \sum_{i=0}^{p-1} s^i -p s^0.
\qedhere
\]
\end{proof}

\begin{prop}
\label{prop:Heisenberg extension}
Assume that $\chi_a\cup \chi_b=0$. Let $f_a$ be an element in $F^\times$. Let  $A=f_aA_0$ be defined as above. Then the homomorphism $\bar{\rho}:= (\chi_a,\chi_b)\colon G_F\to \Z/p\Z\times \Z/p\Z$ lifts to a Heisenberg extension $\rho\colon G_F\to \U_3(\Z/p\Z)$. 
\end{prop}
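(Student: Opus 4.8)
The plan is to realize the lift explicitly by Kummer theory: I would construct a field $M/F$ refining $K/F$ whose Galois group is $\U_3(\Z/p\Z)$, and read off $\rho$ from it. Concretely, set $M = K(\sqrt[p]{A})$ with $A = f_aA_0$ as in Lemma~\ref{lem:operator}, write $y = \sqrt[p]{A}$ and $\beta = \sqrt[p]{b}\in K$, and let $\mu\in\Gal(M/K)$ be the automorphism with $\mu(y)=\xi y$ and $\mu|_K = \id$. The aim is to extend the generators $\sigma_a,\sigma_b$ of $\Gal(K/F)$ to automorphisms of $M$ and to check that, together with $\mu$, they satisfy the defining relations of $\U_3(\Z/p\Z)$; the resulting surjection $G_F\twoheadrightarrow\Gal(M/F)\cong\U_3(\Z/p\Z)$ is then the desired $\rho$, and one verifies that its coordinates are $\rho_{12}=\chi_a$ and $\rho_{23}=\chi_b$.

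First I would record how $A$ transforms under $\Gal(K/F)$. Since $\sigma_b$ fixes $F(\sqrt[p]{a})$ and $f_a\in F^\times$, one has $\sigma_b(A)=A$. By Lemma~\ref{lem:operator} and $b=\beta^p$,
\[
\frac{\sigma_a(A)}{A}=\frac{b}{\alpha^p}=\left(\frac{\beta}{\alpha}\right)^p,
\]
a $p$-th power of the element $\beta/\alpha\in K$. These two facts are exactly what let me extend the automorphisms: I would define $\tilde\sigma_a(y)=y\beta/\alpha$ and $\tilde\sigma_b(y)=y$, and check $\tilde\sigma_a(y)^p=\sigma_a(A)$ and $\tilde\sigma_b(y)^p=\sigma_b(A)$, so that $\tilde\sigma_a,\tilde\sigma_b$ are well-defined field automorphisms of $M$ restricting to $\sigma_a,\sigma_b$ on $K$. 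In particular $M/F$ is Galois.

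Next I would verify the group relations. Iterating the formula for $\tilde\sigma_a$ and using $\sigma_a(\beta)=\beta$ gives
\[
\tilde\sigma_a^{\,i}(y)=y\,\beta^{i}\Big/\prod_{j=0}^{i-1}\sigma_a^{j}(\alpha),
\]
so for $i=p$ the product telescopes to $N_{F(\sqrt[p]{a})/F}(\alpha)=b=\beta^p$, whence $\tilde\sigma_a^{\,p}(y)=y$ and $\tilde\sigma_a$ has order $p$; likewise $\tilde\sigma_b$ and $\mu$ have order $p$ and $\mu$ is central. Comparing $\tilde\sigma_a\tilde\sigma_b(y)=y\beta/\alpha$ with $\tilde\sigma_b\tilde\sigma_a(y)=\xi y\beta/\alpha$ (using $\sigma_b(\beta)=\xi\beta$, $\sigma_b(\alpha)=\alpha$) yields $[\tilde\sigma_a,\tilde\sigma_b]=\mu^{-1}$, a generator of the centre. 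These are precisely the relations of $\U_3(\Z/p\Z)$, so $\Gal(M/F)\cong\U_3(\Z/p\Z)$ via $\sigma_a\mapsto$ the $(1,2)$-generator, $\sigma_b\mapsto$ the $(2,3)$-generator, $\mu\mapsto$ the central generator, and composing with $G_F\twoheadrightarrow\Gal(M/F)$ defines $\rho$.

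The place where the hypothesis really enters—and the step I expect to be the crux—is the order computation $\tilde\sigma_a^{\,p}=\id$: it succeeds exactly because the norm equation $N_{F(\sqrt[p]{a})/F}(\alpha)=b$ is solvable, which by \cite[Chapter XIV, Proposition 4]{Se} is equivalent to the vanishing of the symbol $(a,b)$, i.e.\ to $\chi_a\cup\chi_b=0$. Equivalently, this single identity is what forces the would-be entry $\rho_{13}$ to satisfy the twisted homomorphism identity $\rho_{13}(\tau_1\tau_2)=\rho_{13}(\tau_1)+\rho_{13}(\tau_2)+\chi_a(\tau_1)\chi_b(\tau_2)$, whose obstruction is the class $\chi_a\cup\chi_b\in H^2(G_F,\Z/p\Z)$. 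A minor loose end I would treat separately is the degenerate case $A\in(K^\times)^p$, where $M=K$ and the constructed map fails to be surjective; there the lift still exists since the obstruction $\chi_a\cup\chi_b$ vanishes, and one may define $\rho_{13}$ directly through the cocycle identity above, which is insensitive to degeneracy.
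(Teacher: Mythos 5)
Your argument is correct and is essentially the paper's own proof: the paper likewise forms $L=K(\sqrt[p]{A})$, extends $\sigma_a,\sigma_b$ to $L$ (and at the end fixes exactly your normalization $\sigma_a(\sqrt[p]{A})=\sqrt[p]{A}\,\sqrt[p]{b}/\alpha$), shows that $\tilde\sigma_a,\tilde\sigma_b$ and $[\tilde\sigma_a,\tilde\sigma_b]$ all have order $p$ with the commutator central and acting by $\xi^{-1}$ on $\sqrt[p]{A}$, and writes down the same isomorphism $\Gal(L/F)\simeq \U_3(\Z/p\Z)$. The one point where you diverge---the worry that $A\in(K^\times)^p$---is vacuous, and is handled more cleanly by your own commutator computation run with arbitrary $p$-th roots (as the paper does) than by the cochain fallback, which would only yield a possibly non-surjective lift: if $\sqrt[p]{A}$ were in $K$, then from $\sigma_a(\sqrt[p]{A})=\xi^i\sqrt[p]{A}\,\sqrt[p]{b}/\alpha$ and $\sigma_b(\sqrt[p]{A})=\xi^j\sqrt[p]{A}$ one gets $\sigma_b\sigma_a(\sqrt[p]{A})=\xi\,\sigma_a\sigma_b(\sqrt[p]{A})$, contradicting the commutativity of $\Gal(K/F)$; hence $[L:K]=p$ always and your main construction applies unconditionally.
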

\begin{proof}[Sketch of Proof] 
Let $L:=K(\sqrt[p]{A})/F$. Then $L/F$ is Galois extension. 
Let $\tilde{\sigma}_a\in {\rm Gal}(L/F)$ (resp. $\tilde{\sigma}_b\in {\rm Gal}(L/F)$) be an extension of $\sigma_a$ (resp. $\sigma_b$). Since $\sigma_b(A)=A$, we have $\tilde{\sigma}_b(\sqrt[p]{A})=\xi^j\sqrt[p]{A}$, for some $j\in \Z$. 
Hence $\tilde{\sigma}_b^p(\sqrt[p]{A})=\sqrt[p]{A}$. This implies that $\tilde{\sigma}_b$ is of order $p$.

On the other hand, we have 
$
\tilde{\sigma}_a(\sqrt[p]{A})^p=\sigma_a(A) =A \dfrac{b}{\alpha^p}.
$
Hence $\tilde{\sigma}_a(\sqrt[p]{A})=\xi^i \sqrt[p]{A}\dfrac{\sqrt[p]{b}}{\alpha}$, for some $i\in \Z$. Then $\tilde{\sigma}_a^{p}(\sqrt[p]{A})=\sqrt[p]{A}$. Thus $\tilde{\sigma}_a$ is of order $p$. 

If we set $\sigma_A:=[\tilde{\sigma}_a,\tilde{\sigma}_b]$, then $\sigma_A(\sqrt[p]{A})=\xi^{-1} \sqrt[p]{A}$. This implies that $\sigma_A$ is of order $p$. Also one can check that
\[
[\tilde{\sigma}_a,\sigma_A]=[\tilde{\sigma}_b,\sigma_A]=1.
\]

We can define an isomorphism $\varphi \colon {\rm Gal}(L/F)\to \U_3(\Z/p\Z)$ by letting
\[
\sigma_a \mapsto \begin{bmatrix}
1& 1 & 0 \\
0& 1 & 0 \\
0& 0 & 1
\end{bmatrix},
\sigma_b\mapsto 
\begin{bmatrix}
1& 0 & 0 \\
0& 1 & 1 \\
0& 0 & 1 
\end{bmatrix},
\sigma_{A}\mapsto 
\begin{bmatrix}
1& 0 & 1 \\
0& 1 & 0 \\
0& 0 & 1 
\end{bmatrix}.
\]
Then the composition $\rho\colon G_F\to {\rm Gal}(L/F)\stackrel{\varphi}{\to} \U_3(\Z/p\Z)$ is the desired lifting of $\bar{\rho}$. 

Note that $[L:F]=p^3$. Hence there are exactly $p$ extensions of $\sigma_a\in {\rm Gal}(E/F)$ to the automorphisms in ${\rm Gal}(L/F)$ since $[L:E]=p^3/p^2=p$. Therefore for later use,  we can choose an extension, still denoted by $\sigma_a\in {\rm Gal}(L/F)$, of $\sigma_a\in {\rm Gal}(K/F)$ in such a way that $\sigma_a(\sqrt[p]{A})= \sqrt[p]{A}\dfrac{\sqrt[p]{b}}{\alpha}$. 
\end{proof}
%%%%%%%%%%%%%%%%%%%%%%%%%%%%%%%%%%%%%%%%%%
%%%%%%%%%%%%%%%%%%%%%%%%%%%%%%%%%%%%%%%%%%%
\section{The construction of $\U_4(\F_p)$-extensions: the case of characteristic $\not=p$}
\subsection{Fields containing primitive $p$-th roots of unity}
\label{subsec:with p-primitive}
In this subsection we assume that $F$ is a field containing a primitive $p$-th root $\xi$ of unity.   
The following result can be deduced from Theorem~\ref{thm:U4}, but for the convenience of the reader we include a proof here.
\begin{prop}
\label{prop:U4 existence}
 Assume that there exists a Galois extension $M/F$ such that ${\rm Gal}(M/F)\simeq \U_4(\F_p)$. Then there exist $a,b,c\in F^\times$ such that $a,b,c$ are  linearly independent modulo $(F^\times)^p$ and $(a,b)=(b,c)=0$. Moreover $M$ contains $F(\sqrt[p]{a},\sqrt[p]{b},\sqrt[p]{c})$. 
\end{prop}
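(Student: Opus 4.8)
The plan is to read the three elements $a,b,c$ directly off the given isomorphism and then to exploit the internal block structure of $\U_4(\F_p)$ to produce the two vanishing norm residue symbols. Write $\rho\colon G_F \to \U_4(\F_p)$ for the composition of the quotient $G_F \twoheadrightarrow \Gal(M/F)$ with the chosen isomorphism $\Gal(M/F)\simeq \U_4(\F_p)$, and consider the three superdiagonal coordinate homomorphisms $\rho_{12},\rho_{23},\rho_{34}\colon G_F\to \Z/p\Z$. Each lies in $H^1(G_F,\Z/p\Z)$, so by the Kummer map it equals $\chi_a$, $\chi_b$, $\chi_c$ for uniquely determined classes $a,b,c\in F^\times/(F^\times)^p$. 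Since the commutator subgroup of $\U_4(\F_p)$ is exactly the set of matrices whose superdiagonal vanishes, the abelianization $\U_4(\F_p)^{\mathrm{ab}}$ is $(\Z/p\Z)^3$ realized by these three coordinates; as $\rho$ is surjective, the composite $(\chi_a,\chi_b,\chi_c)\colon G_F\to(\Z/p\Z)^3$ is surjective, which is precisely the assertion that $a,b,c$ are linearly independent modulo $(F^\times)^p$.

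First I would observe that $\U_4(\F_p)$ admits two natural surjections onto $\U_3(\F_p)$: the projection $\pi_1$ onto the upper-left $3\times 3$ block, retaining the coordinates $(1,2),(2,3),(1,3)$, and the projection $\pi_2$ onto the lower-right $3\times 3$ block, retaining $(2,3),(3,4),(2,4)$. A direct inspection of the matrix product shows each block projection is a group homomorphism, the point being that for indices confined to the chosen block the off-block entries never enter the relevant sums. Hence $\pi_1\circ\rho\colon G_F\to\U_3(\Z/p\Z)$ is a Heisenberg representation whose superdiagonal coordinates are $\chi_a$ and $\chi_b$, and similarly $\pi_2\circ\rho$ is a Heisenberg representation with superdiagonal coordinates $\chi_b$ and $\chi_c$.

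The key step is then the obstruction-theoretic dictionary for the central extension $0\to\Z/p\Z\to\U_3(\Z/p\Z)\to(\Z/p\Z)^2\to1$: a homomorphism $(\chi_a,\chi_b)\colon G_F\to(\Z/p\Z)^2$ lifts to $\U_3(\Z/p\Z)$ \emph{if and only if} the pullback along $(\chi_a,\chi_b)$ of the class of this extension vanishes in $H^2(G_F,\Z/p\Z)$, and that pullback is exactly the cup product $\chi_a\cup\chi_b$, i.e. the symbol $(a,b)$. Since $\pi_1\circ\rho$ exhibits such a lift, we conclude $(a,b)=0$; running the same argument with $\pi_2\circ\rho$ gives $(b,c)=0$. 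This is the heart of the proof and the main obstacle, since it is where one must correctly identify the Heisenberg obstruction class with the cup product (the converse direction to Proposition~\ref{prop:Heisenberg extension}); everything else is unwinding the matrix combinatorics and standard Kummer theory.

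Finally, for the containment, Kummer theory identifies $F(\sqrt[p]{a},\sqrt[p]{b},\sqrt[p]{c})$ with the fixed field of $\ker\rho_{12}\cap\ker\rho_{23}\cap\ker\rho_{34}$. Since each of $\rho_{12},\rho_{23},\rho_{34}$ factors through $\Gal(M/F)$, this intersection of kernels contains $\Gal(F_s/M)$, and therefore the corresponding fixed field is contained in $M$, completing the argument.
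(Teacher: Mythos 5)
Your proposal is correct and follows essentially the same route as the paper: both read $a,b,c$ off the superdiagonal entries $\rho_{12},\rho_{23},\rho_{34}$, obtain linear independence from surjectivity of $(\chi_a,\chi_b,\chi_c)$ together with Kummer theory, and get the containment because the three characters factor through $\Gal(M/F)$. The only difference is one of packaging in the cup-product step: where you invoke the lifting criterion for the central extension $0\to\Z/p\Z\to\U_3(\Z/p\Z)\to(\Z/p\Z)^2\to 1$ applied to the block projections $\pi_1\circ\rho$ and $\pi_2\circ\rho$, the paper observes directly that the coboundaries of the cochains $\rho_{13}$ and $\rho_{24}$ equal (up to sign) $\rho_{12}\cup\rho_{23}$ and $\rho_{23}\cup\rho_{34}$, which is exactly the cochain-level content of your obstruction-class argument.
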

\begin{proof}
 Let $\rho$ be the composite $\rho\colon G_F\surj{\rm Gal}(M/F)\simeq \U_4(\F_p)$. Then $\rho_{12},\rho_{23}$ and $\rho_{34}$ are elements in ${\rm Hom}(G_F,\F_p)$. Hence there are $a,b$ and $c$ in $F^\times$ such that $\chi_a=\rho_{12}$, $\chi_b=\rho_{23}$ and $\chi_c=\rho_{34}$. Since $\rho$ is a group homomorphism, by looking at the coboundaries of $\rho_{13}$ and $\rho_{24}$, we see that 
\[
\chi_a\cup \chi_b =\chi_b\cup\chi_c=0 \in H^2(G_F,\F_p).
\]
 This implies that $(a,b)=(b,c)=0$ by \cite[Chapter XIV, Proposition 5]{Se}.
 
 Let $\varphi:=(\chi_a,\chi_b,\chi_c)\colon G_F\to (\F_p)^3$. Then $\varphi$ is surjective. By Galois correspondence, we  have 
 \[
 {\rm Gal}(F_s/F(\sqrt[p]{a},\sqrt[p]{b},\sqrt[p]{c}))= \ker\chi_a\cap \ker\chi_b\cap\ker\chi_c=\ker\varphi.
 \]
 This implies that ${\rm Gal}(F(\sqrt[p]{a},\sqrt[p]{b},\sqrt[p]{c})/F)\simeq (\F_p)^3$. Hence by Kummer theory, we see that $a,b$ and $c$ are  linearly independent modulo $(F^\times)^p$. Clearly, $M$ contains $F(\sqrt[p]{a},\sqrt[p]{b},\sqrt[p]{c})$. 
\end{proof}

Conversely we shall see in this section that given these necessary conditions  for the existence of $\U_4(\F_p)$-Galois extensions over  $F$, as in Proposition~\ref{prop:U4 existence}, we can construct a Galois extension $M/F$ with the Galois group isomorphic to $\U_4(\F_p)$. 

From now on we assume that we are given  elements $a$, $b$ and $c$ in $F^\times$ such that 
 $a$, $b$ and $c$ are linearly independent modulo $(F^\times)^p$ and that 
  $(a,b)=(b,c)=0$. We shall construct a Galois $\U_4(\F_p)$-extension $M/F$ such that $M$ contains $F(\sqrt[p]{a},\sqrt[p]{b},\sqrt[p]{c})$.

First we note that  $F(\sqrt[p]{a},\sqrt[p]{b},\sqrt[p]{c})/F$ is a Galois extension with ${\rm Gal}(F(\sqrt[p]{a},\sqrt[p]{b},\sqrt[p]{c})/F)$ generated by $\sigma_a,\sigma_b,\sigma_c$. Here
\[
\begin{aligned}
\sigma_a(\sqrt[p]{a})&=\xi \sqrt[p]{a}, \sigma_a(\sqrt[p]{b})=\sqrt[p]{b}, \sigma_a(\sqrt[p]{c})=\sqrt[p]{c};\\
\sigma_b(\sqrt[p]{a})&=\sqrt[p]{a}, \sigma_b(\sqrt[p]{b})=\xi \sqrt[p]{b}, \sigma_b(\sqrt[p]{c})= \sqrt[p]{c};\\
\sigma_c(\sqrt[p]{a})&=\sqrt[p]{a}, \sigma_c(\sqrt[p]{b})=\sqrt[p]{b}, \sigma_c(\sqrt[p]{c})=\xi \sqrt[p]{c}.
\end{aligned}
\]

Let $E= F(\sqrt[p]{a},\sqrt[p]{c})$. Since $(a,b)=(b,c)=0$, there are $\alpha$ in $F(\sqrt[p]{a})$ and $\gamma$ in $F(\sqrt[p]{c})$ (see  \cite[Chapter XIV, Proposition 4 (iii)]{Se}) such that
\[
N_{F(\sqrt[p]{a})/F}(\alpha)=b=N_{F(\sqrt[p]{c})/F}(\gamma).
\]

Let $G$ be the Galois group ${\rm Gal}(E/F)$. Then $G=\langle \sigma_a,\sigma_c \rangle $, where $\sigma_a\in G$ (respectively $\sigma_c\in G$) is the restriction of $\sigma_a\in {\rm Gal}(F(\sqrt[p]{a},\sqrt[p]{b},\sqrt[p]{c})/F)$ (respectively $\sigma_c\in {\rm Gal}(F(\sqrt[p]{a},\sqrt[p]{b},\sqrt[p]{c})/F)$).

Our next goal is to find an element $\delta$ in $E^\times$ such that the Galois closure of  $E(\sqrt[p]{\delta})$ is our desired $\U_4(\F_p)$-extension of $F$.
We define 
\[
\begin{aligned}
C_0=\prod_{i=0}^{p-2} \sigma_c^{i}(\gamma^{p-i-1}) \in  F(\sqrt[p]{a}),
\end{aligned}
\] 
and define $B:=\gamma/\alpha$. Then we have the following result, which follows from Lemma~\ref{lem:operator} (see \cite[Proposition 3.2]{Ma} and/or \cite[Lemma 4.2]{MT5}). 
\begin{lem}
\label{lem:operators}
 We have
\begin{enumerate}
\item $\dfrac{\sigma_a(A_0)}{A_0}=N_{\sigma_c}(B)$.
\item $\dfrac{\sigma_c(C_0)}{C_0}=N_{\sigma_a}(B)^{-1}$.
\QEDB
\end{enumerate}
\end{lem}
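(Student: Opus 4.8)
The plan is to reduce both identities to Lemma~\ref{lem:operator} together with the two defining norm conditions $N_{F(\sqrt[p]{a})/F}(\alpha)=b=N_{F(\sqrt[p]{c})/F}(\gamma)$. The structural fact I would exploit throughout is that $\alpha\in F(\sqrt[p]{a})$ is fixed by $\sigma_c$ while $\gamma\in F(\sqrt[p]{c})$ is fixed by $\sigma_a$; this is precisely what makes the multiplicative operators $N_{\sigma_a}$ and $N_{\sigma_c}$ interact cleanly with $B=\gamma/\alpha$, since each acts nontrivially on only one of the two factors.

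For part~(1) I would first invoke Lemma~\ref{lem:operator} with $f_a=1$, so that $A=A_0$ and
\[
\frac{\sigma_a(A_0)}{A_0}=\frac{b}{\alpha^p}.
\]
Next I would expand the right-hand side of the claim using that $N_{\sigma_c}$ acts on $E^\times$ as the product over the orbit, $N_{\sigma_c}(B)=\prod_{i=0}^{p-1}\sigma_c^i(B)$. Because $\sigma_c$ fixes $\alpha$, each factor equals $\sigma_c^i(\gamma)/\alpha$, whence
\[
N_{\sigma_c}(B)=\frac{\prod_{i=0}^{p-1}\sigma_c^i(\gamma)}{\alpha^p}=\frac{N_{F(\sqrt[p]{c})/F}(\gamma)}{\alpha^p}=\frac{b}{\alpha^p}.
\]
Comparing the two displays yields $\sigma_a(A_0)/A_0=N_{\sigma_c}(B)$.

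For part~(2) I would run the symmetric computation with the roles of $(a,\alpha)$ and $(c,\gamma)$ interchanged. The $c$-analogue of Lemma~\ref{lem:operator} gives $\sigma_c(C_0)/C_0=b/\gamma^p$, and expanding $N_{\sigma_a}(B)=\prod_{i=0}^{p-1}\sigma_a^i(B)$ while using that $\sigma_a$ fixes $\gamma$ produces
\[
N_{\sigma_a}(B)=\frac{\gamma^p}{\prod_{i=0}^{p-1}\sigma_a^i(\alpha)}=\frac{\gamma^p}{N_{F(\sqrt[p]{a})/F}(\alpha)}=\frac{\gamma^p}{b};
\]
inverting recovers $\sigma_c(C_0)/C_0=N_{\sigma_a}(B)^{-1}$.

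I do not expect a genuine obstacle here: once the fixing properties of $\alpha$ and $\gamma$ under the ``opposite'' generator are noticed, both parts are essentially bookkeeping. The only subtlety is the asymmetry between the two formulas — the inverse in~(2) — which I would trace to whether the variable acted on by the norm operator sits in the numerator or the denominator of $B=\gamma/\alpha$: in~(1) the surviving factor is $\gamma$ (numerator, norm $=b$), while in~(2) it is $\alpha$ (denominator, producing $b^{-1}$).
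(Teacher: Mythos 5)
Your proposal is correct and is exactly the derivation the paper intends: the paper states this lemma without proof, noting only that it ``follows from Lemma~\ref{lem:operator}'', and your argument supplies precisely that reduction — Lemma~\ref{lem:operator} (and its $c$-analogue) identifies $\sigma_a(A_0)/A_0$ with $b/\alpha^p$ and $\sigma_c(C_0)/C_0$ with $b/\gamma^p$, while expanding $N_{\sigma_c}(B)$ and $N_{\sigma_a}(B)$ using the facts that $\sigma_c$ fixes $\alpha$ and $\sigma_a$ fixes $\gamma$ gives the same quantities (the inverse in~(2) arising, as you say, because $\alpha$ sits in the denominator of $B$). No gaps; your computation matches the cited route.
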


\begin{rmk}
\label{rmk:modification}
 We would like to  informally explain the meaning of the next lemma. 
  From our hypothesis $(a,b)=0=(b,c)$ and from Subsection~\ref{subsec:Heisenberg}, we see that we can obtain two Heisenberg extensions $L_1=F(\sqrt[p]{a},\sqrt[p]{b},\sqrt[p]{A_0})$ and $L_2=F(\sqrt[p]{b},\sqrt[p]{c},\sqrt[p]{C_0})$ of $F$. Here we have chosen specific elements $A_0\in F(\sqrt[p]{a})$ and $C_0\in F(\sqrt[p]{c})$. However we may  not be able to embed  the compositum of $L_1$ and $L_2$ into our desired Galois extension $M/F$ with $\Gal(M/F)\simeq \U_4(\F_p)$. 
  We know that we can modify the element $A_0$ by any element $f_a\in F^\times$ and the element $C_0$ by any element $f_c\in F^\times$ obtaining elements $A=f_aA_0$ and $C=f_cC_0$ instead of $A_0$ and $C_0$. This new choice of elements may change the fields $L_1$ and $L_2$ but the new fields will  still be Heisenberg extensions containing $F(\sqrt[p]{a},\sqrt[p]{b})$ and $F(\sqrt[p]{b},\sqrt[p]{c})$ respectively.
The next lemma will provide us with a suitable  modification of $A_0$ and $C_0$. From the proof of Theorem~\ref{thm:construction} we shall see that the compositum of these modified Heisenberg extensions can indeed be embedded into a Galois extension $M/F$ with $\Gal(M/F)\simeq \U_4(\F_p)$. This explains our comment in the introduction in the paragraph related to the "automatic realization of Galois groups".
\end{rmk}

\begin{lem}
\label{lem:modification}
Assume that there exist $C_1, C_2\in E^\times$ such that 
\[ B=\frac{\sigma_a(C_1)}{C_1} \frac{C_2}{\sigma_c(C_2)}.\]
Then $N_{\sigma_c}(C_1)/A_0$ and $N_{\sigma_a}(C_2)/C_0$ are in $F^\times$. Moreover, if we let $A=N_{\sigma_c}(C_1)\in F(\sqrt[p]{a})^\times$ and $C=N_{\sigma_a}(C_2)\in F(\sqrt[p]{c})^\times$, then  there exists 
 $\delta \in E^\times$ such that
\[
\begin{aligned}
\frac{\sigma_c(\delta)}{\delta}&= A C_1^{-p},\\
\frac{\sigma_a(\delta)}{\delta}&=C C_2^{-p}.
\end{aligned}
\]
\end{lem}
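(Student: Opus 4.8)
The plan is to prove the two assertions in turn, the first by a fixed-field argument and the second by two successive applications of Hilbert~90. Throughout I use that $E/F$ is Galois with $G=\langle\sigma_a,\sigma_c\rangle\simeq(\Z/p\Z)^2$ abelian, that $E^{\sigma_c}=F(\sqrt[p]{a})$ and $E^{\sigma_a}=F(\sqrt[p]{c})$, and that $E^G=F$; I also freely use that $N_{\sigma_a}$ and $N_{\sigma_c}$ are multiplicative and commute with the (commuting) automorphisms $\sigma_a,\sigma_c$.

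For the first assertion, set $A:=N_{\sigma_c}(C_1)$. Since $\sigma_c$ cyclically permutes the factors of $A$, it is $\sigma_c$-invariant, so $A\in F(\sqrt[p]{a})^\times$, the same field containing $A_0$; hence it suffices to show $A/A_0$ is $\sigma_a$-invariant, i.e. $\sigma_a(A)/A=\sigma_a(A_0)/A_0$. Because $\sigma_a$ commutes with $\sigma_c$ and $N_{\sigma_c}$ is multiplicative, $\sigma_a(A)/A=N_{\sigma_c}(\sigma_a(C_1)/C_1)$. Substituting the hypothesis $\sigma_a(C_1)/C_1=B\,\sigma_c(C_2)/C_2$ and using $N_{\sigma_c}(\sigma_c(C_2)/C_2)=1$ (a telescoping norm) collapses this to $N_{\sigma_c}(B)$, which equals $\sigma_a(A_0)/A_0$ by Lemma~\ref{lem:operators}(1). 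Thus $A/A_0\in F(\sqrt[p]{a})^{\sigma_a}=F^\times$. The claim $C/C_0\in F^\times$ for $C:=N_{\sigma_a}(C_2)$ follows by the mirror-image computation, this time invoking Lemma~\ref{lem:operators}(2) and the relation $\sigma_c(C_2)/C_2=(\sigma_a(C_1)/C_1)\,B^{-1}$; along the way one records $\sigma_a(A)/A=N_{\sigma_c}(B)$ and $\sigma_c(C)/C=N_{\sigma_a}(B)^{-1}$ for use below.

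For the second assertion, write $u:=AC_1^{-p}$ and $v:=CC_2^{-p}$, so the goal is $\delta\in E^\times$ with $\sigma_c(\delta)/\delta=u$ and $\sigma_a(\delta)/\delta=v$. First I would solve the $\sigma_c$-equation over the cyclic extension $E/F(\sqrt[p]{a})$: since $A$ is $\sigma_c$-fixed one has $N_{\sigma_c}(u)=N_{\sigma_c}(A)\,N_{\sigma_c}(C_1)^{-p}=A^p\,A^{-p}=1$, so Hilbert~90 furnishes $\delta_0\in E^\times$ with $\sigma_c(\delta_0)/\delta_0=u$. Any other solution differs by a factor in $F(\sqrt[p]{a})^\times$, so I would next seek $\lambda\in F(\sqrt[p]{a})^\times$ making $\delta:=\lambda\delta_0$ satisfy the $\sigma_a$-equation; since $\sigma_c(\lambda)=\lambda$ this leaves the $\sigma_c$-equation intact and requires $\sigma_a(\lambda)/\lambda=\mu$, where $\mu:=v\delta_0/\sigma_a(\delta_0)$. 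One checks that $\mu$ is $\sigma_c$-invariant, hence lies in $F(\sqrt[p]{a})^\times$, and that $N_{\sigma_a}(\mu)=N_{\sigma_a}(v)=1$ (using that $N_{\sigma_a}(\delta_0)$ is $\sigma_a$-periodic, and that $C$ is $\sigma_a$-fixed so $N_{\sigma_a}(v)=C^pC^{-p}=1$); a second application of Hilbert~90, now for $F(\sqrt[p]{a})/F$, produces the required $\lambda$, and $\delta=\lambda\delta_0$ then satisfies both equations.

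The hard part will be the single compatibility identity that makes $\mu$ descend to $F(\sqrt[p]{a})$, namely the ``commutation relation'' $\sigma_a(u)/u=\sigma_c(v)/v$. Feeding in $\sigma_a(A)/A=N_{\sigma_c}(B)$ and $\sigma_c(C)/C=N_{\sigma_a}(B)^{-1}$ from the first part, together with $\sigma_a(C_1)/C_1=B\,\sigma_c(C_2)/C_2$, this identity is equivalent to
\[
N_{\sigma_c}(B)\,N_{\sigma_a}(B)=B^{p}.
\]
This is the crux, and it is where the common norm $N_{\sigma_a}(\alpha)=b=N_{\sigma_c}(\gamma)$ is indispensable: since $\sigma_c$ fixes $\alpha$ and $\sigma_a$ fixes $\gamma$, one computes $N_{\sigma_c}(B)=N_{\sigma_c}(\gamma)/\alpha^{p}=b/\alpha^{p}$ and $N_{\sigma_a}(B)=\gamma^{p}/N_{\sigma_a}(\alpha)=\gamma^{p}/b$, whose product is $(\gamma/\alpha)^{p}=B^{p}$. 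With this identity in hand all the verifications above go through and the construction of $\delta$ is complete.
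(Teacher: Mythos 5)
Your proof is correct. Its first half (showing $N_{\sigma_c}(C_1)/A_0$ and $N_{\sigma_a}(C_2)/C_0$ lie in $F^\times$ via Lemma~\ref{lem:operators} and the telescoping of $N_{\sigma_c}(\sigma_c(C_2)/C_2)$ and $N_{\sigma_a}(\sigma_a(C_1)/C_1)$) coincides with the paper's argument, and your ``crux'' identity $N_{\sigma_c}(B)\,N_{\sigma_a}(B)=B^{p}$ is exactly the paper's computation $\frac{b}{\alpha^p}\cdot\frac{\gamma^p}{b}\cdot B^{-p}=1$ in different notation, since $N_{\sigma_c}(B)=b/\alpha^p$ and $N_{\sigma_a}(B)=\gamma^p/b$. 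Where you genuinely diverge is the existence step for $\delta$: the paper, having checked the two norm conditions $N_{\sigma_c}(AC_1^{-p})=N_{\sigma_a}(CC_2^{-p})=1$ and the compatibility relation $\sigma_a(AC_1^{-p})/(AC_1^{-p})=\sigma_c(CC_2^{-p})/(CC_2^{-p})$, simply cites a result of Connell \cite{Co} (a bicyclic variant of Hilbert's Theorem 90, independently due to Amitsur and Saltman \cite{AS}), whereas you prove that variant inline by two successive applications of the classical cyclic Hilbert 90: first solving $\sigma_c(\delta_0)/\delta_0=AC_1^{-p}$ over $E/F(\sqrt[p]{a})$, then correcting by $\lambda\in F(\sqrt[p]{a})^\times$ with $\sigma_a(\lambda)/\lambda=\mu$, where the compatibility relation is precisely what makes $\mu$ descend to $F(\sqrt[p]{a})$ (via commutativity of $\sigma_a$ and $\sigma_c$) and the norm condition on $CC_2^{-p}$ gives $N_{\sigma_a}(\mu)=1$. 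Your route buys self-containedness---nothing is needed beyond Hilbert 90 for cyclic degree-$p$ extensions---at the cost of the extra descent bookkeeping; the paper's citation keeps the proof shorter and isolates the two-variable Hilbert 90 as a reusable black box. Both verifications you delegate to that bookkeeping (the $\sigma_c$-invariance of $\mu$ and $N_{\sigma_a}(\mu)=N_{\sigma_a}(v)=1$) do check out, so the argument is complete.
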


\begin{proof} By Lemma~\ref{lem:operators}, we have
\[
\frac{\sigma_a(A_0)}{A_0}= N_{\sigma_c}(B) = N_{\sigma_c}\left(\frac{\sigma_a(C_1)}{C_1}\right) N_{\sigma_c}\left(\frac{C_2}{\sigma_c(C_2)}\right)
= \frac{\sigma_a(N_{\sigma_c}(C_1))}{N_{\sigma_c}(C_1)}.
\]
This implies that
\[
\frac{N_{\sigma_c}(C_1)}{A_0}= \sigma_a\left(\frac{N_{\sigma_c}(C_1)}{A_0}\right).
\]
Hence
\[
\dfrac{N_{\sigma_c}(C_1)}{A_0} \in F(\sqrt[p]{c})^\times\cap F(\sqrt[p]{a})^\times=F^\times.
\]

By Lemma~\ref{lem:operators}, we have
\[
\frac{\sigma_c(C_0)}{C_0}= N_{\sigma_a}(B^{-1}) = N_{\sigma_a}\left(\frac{C_1}{\sigma_a(C_1)}\right) N_{\sigma_a}\left(\frac{\sigma_c(C_2)}{C_2}\right)
= \frac{\sigma_c(N_{\sigma_a}(C_2))}{N_{\sigma_a}(C_2)}.
\]
This implies that
\[
\frac{N_{\sigma_a}(C_2)}{C_0}= \sigma_c\left(\frac{N_{\sigma_a}(C_2)}{C_0}\right).
\]
Hence
\[
\dfrac{N_{\sigma_a}(C_2)}{C_0} \in F(\sqrt[p]{a})^\times\cap F(\sqrt[p]{c})^\times=F^\times.
\]

 Clearly, one  has
\[
\begin{aligned}
N_{\sigma_a}(CC_2^{-p})&=1,\\
N_{\sigma_c}(A C_1^{-p})&=1.
\end{aligned}
\]
We also have
\[
\begin{aligned}
\frac{\sigma_a(AC_1^{-p})}{AC_1^{-p}}
 \frac{CC_2^{-p}}{\sigma_c(CC_2^{-p})}&= \frac{\sigma_a(A)}{A} \left(\frac{\sigma_a(C_1)}{C_1}\right)^{-p} \frac{C}{\sigma_c(C)}\left(\frac{C_2}{\sigma_c(C_2)}\right)^{-p}\\
&= \frac{b}{\alpha^p}\frac{\gamma^p}{b} B^{-p}\\
&=1.
\end{aligned}
\]
Hence, we have
\[
\frac{\sigma_a(AC_1^{-p})}{AC_1^{-p}}= \frac{\sigma_c(CC_2^{-p})}{CC_2^{-p}}.
\]
From \cite[page 756]{Co} we see that there exists $\delta \in E^\times$ such that 
\[
\begin{aligned}
\frac{\sigma_c(\delta)}{\delta}&= A C_1^{-p},\\
\frac{\sigma_a(\delta)}{\delta}&=C C_2^{-p},
\end{aligned}
\]
as desired.
\end{proof}
\begin{rmk}
 The result of I. G. Connell which we use in the above proof, is a variant of Hilbert's Theorem 90. This result was independently discovered by S. Amitsur and D. Saltman in \cite[Lemma 2.4]{AS}. (See also  \cite[Theorem 2]{DMSS} for the case $p=2$.)
\end{rmk}
\begin{lem}
\label{lem:C1C2}
There exists $e\in E^\times$  such that $B= \dfrac{\sigma_a\sigma_c(e)}{e}$. Furthermore, for such an element $e$ the following statements are true.
\begin{enumerate}
\item If we set $C_1:=\sigma_c(e)\in E^\times$, $C_2:=e^{-1}\in E^\times$, then $B=\dfrac{\sigma_a(C_1)}{C_1} \dfrac{C_2}{\sigma_c(C_2)}$.
\item If we set $C_1:=e \in E^\times$, $C_2:=(eB)\sigma_c(eB)\cdots \sigma_c^{p-2}(eB)\in E^\times$, then $B=\dfrac{\sigma_a(C_1)}{C_1} \dfrac{C_2}{\sigma_c(C_2)}$.
\end{enumerate}
\end{lem}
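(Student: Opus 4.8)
The plan is to produce the element $e$ by Hilbert's Theorem~90 applied to a suitable cyclic subextension of $E/F$, and then to verify the two displayed identities by direct cancellation computations that use nothing beyond the defining relation $\sigma_a\sigma_c(e)=eB$.

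First I would set up the existence statement. Since $a$ and $c$ are linearly independent modulo $(F^\times)^p$, Kummer theory gives $G=\Gal(E/F)\simeq(\Z/p\Z)^2$, so $\sigma_a$ and $\sigma_c$ commute and each has order $p$; consequently $\tau:=\sigma_a\sigma_c$ has order $p$ and generates a cyclic group $\langle\tau\rangle$ with $E/E^{\langle\tau\rangle}$ cyclic of degree $p$. By Hilbert's Theorem~90 for this cyclic extension, $B$ has the form $\tau(e)/e$ for some $e\in E^\times$ exactly when the norm $N_{\tau}(B)=\prod_{i=0}^{p-1}\tau^i(B)$ equals $1$. To verify this I would use that $\alpha\in F(\sqrt[p]{a})$ is fixed by $\sigma_c$ while $\gamma\in F(\sqrt[p]{c})$ is fixed by $\sigma_a$, so that $\tau^i(B)=\tau^i(\gamma/\alpha)=\sigma_c^i(\gamma)/\sigma_a^i(\alpha)$; the product over $i$ then separates into $N_{F(\sqrt[p]{c})/F}(\gamma)/N_{F(\sqrt[p]{a})/F}(\alpha)=b/b=1$. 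This produces $e\in E^\times$ with $B=\sigma_a\sigma_c(e)/e$, equivalently $\sigma_a\sigma_c(e)=eB$, which is the relation driving both remaining parts.

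For statement (1), with $C_1=\sigma_c(e)$ and $C_2=e^{-1}$, I would compute $\sigma_a(C_1)/C_1=\sigma_a\sigma_c(e)/\sigma_c(e)$ and $C_2/\sigma_c(C_2)=\sigma_c(e)/e$; multiplying, the factor $\sigma_c(e)$ cancels and leaves $\sigma_a\sigma_c(e)/e=B$. For statement (2), writing $u:=eB$ so that $C_2=\prod_{i=0}^{p-2}\sigma_c^i(u)$, the ratio $C_2/\sigma_c(C_2)$ telescopes to $u/\sigma_c^{p-1}(u)=eB/\sigma_c^{-1}(eB)$, using $\sigma_c^p=\id$. Multiplying by $\sigma_a(C_1)/C_1=\sigma_a(e)/e$ gives $\sigma_a(e)\,B/\sigma_c^{-1}(eB)$, and applying $\sigma_c^{-1}$ to the defining relation yields $\sigma_a(e)=\sigma_c^{-1}(eB)$, which collapses the expression to $B$.

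The only step with genuine content is the existence of $e$; the two identities are formal once $\sigma_a\sigma_c(e)=eB$ is available. I expect the main point to watch in the existence step is the reduction of the mixed cyclic norm $N_{\sigma_a\sigma_c}(B)$ to $b/b$: this rests on the commutativity of $\sigma_a,\sigma_c$ and on each of $\alpha,\gamma$ being fixed by the opposite generator, so that the norm along $\langle\sigma_a\sigma_c\rangle$ factors into the two single-variable norms already equal to $b$. Beyond confirming the group structure and orders before invoking Hilbert~90, I anticipate no real obstacle, only bookkeeping.
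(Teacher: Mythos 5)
Your proposal is correct and follows essentially the same route as the paper: verify $N_{\sigma_a\sigma_c}(B)=N_{\sigma_c}(\gamma)/N_{\sigma_a}(\alpha)=b/b=1$ using that $\sigma_c$ fixes $\alpha$ and $\sigma_a$ fixes $\gamma$, invoke Hilbert's Theorem 90 for the cyclic group $\langle\sigma_a\sigma_c\rangle$ to get $e$, and then derive (1) and (2) by the same cancellation and telescoping computations (the paper writes $\sigma_c^{p-1}(eB)=\sigma_a(e)$ where you write $\sigma_c^{-1}(eB)=\sigma_a(e)$, which is the same identity). Your added care in naming the cyclic subextension $E/E^{\langle\tau\rangle}$ is a welcome precision but not a different argument.
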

\begin{proof}
We have 
\[ N_{\sigma_a\sigma_c}(B)=\frac{N_{\sigma_a\sigma_c}(\alpha)}{N_{\sigma_a\sigma_c}(\gamma)}=\frac{N_{\sigma_a}(\alpha)}{N_{\sigma_c}(\gamma)}=\frac{b}{b}=1.
\]
 Hence by Hilbert's Theorem 90, there exists $e \in E^\times$ such  that 
$B=\dfrac{\sigma_a\sigma_c(e)}{e}. $
 \\
 \\
 (1) Clearly, we have 
 \[
 \dfrac{\sigma_a(C_1)}{C_1} \frac{C_2}{\sigma_c(C_2)}=\dfrac{\sigma_a(\sigma_c(e))}{\sigma_c(e)} \frac{e^{-1}}{\sigma_c(e^{-1})}=\dfrac{\sigma_a\sigma_c(e)}{e}=B.
 \]
  (2) From $B=\dfrac{\sigma_a\sigma_c(e)}{e}$, we see that $eB= \sigma_a\sigma_c(e)$. Hence $\sigma_c^{p-1}(eB)=\sigma_a(e)$. Therefore 
  \[
  B=\frac{\sigma_a(e)}{e}\frac{eB}{\sigma_c^{p-1}(eB)}=  \frac{\sigma_a(C_1)}{C_1} \frac{C_2}{\sigma_c(C_2)}.
  \qedhere
  \]
\end{proof}
%%%%%%%%%%%%%%%%%%%%%%%%%%%%%%%%%%%%%%%%%%%%%
%%%%%%%%%%%%%%%%%%%%%%%%%%%%%%%%%%%%%%%%%%%%%%%
\begin{thm} 
\label{thm:construction}
Let the  notation and assumption be as in Lemma~\ref{lem:modification}. Let $M:= E(\sqrt[p]{\delta},\sqrt[p]{A},\sqrt[p]{C},\sqrt[p]{b})$. Then $M/F$ is a Galois extension, $M$ contains $F(\sqrt[p]{a},\sqrt[p]{b},\sqrt[p]{c})$, and ${\rm Gal}(M/F)\simeq \U_4(\F_p)$.
\end{thm}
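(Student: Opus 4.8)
The plan is to match the six radicals generating $M$ with the six above-diagonal entries of $\U_4(\F_p)$: the characters $\chi_a,\chi_b,\chi_c$ (i.e. $\sqrt[p]{a},\sqrt[p]{b},\sqrt[p]{c}$) sit in the entries $(1,2),(2,3),(3,4)$, the two Heisenberg radicals $\sqrt[p]{A}$ and $\sqrt[p]{C}$ sit in $(1,3)$ and $(2,4)$, and $\sqrt[p]{\delta}$ sits in the corner $(1,4)$. Concretely I would (i) prove $[M:F]=p^6$; (ii) extend $\sigma_a,\sigma_c$ and build $\sigma_b$ as automorphisms of $M$, which simultaneously shows $M/F$ is Galois; and (iii) compute the commutators of $\sigma_a,\sigma_b,\sigma_c$, match them against the standard presentation of $\U_4(\F_p)$, and use a center-plus-order argument to upgrade the resulting homomorphism to an isomorphism.

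For (i), since $[M:F]\le[E:F]\,p^4=p^6$ it suffices to prove that $b,A,C,\delta$ are $\F_p$-independent in $E^\times/(E^\times)^p$. I would work in the $\F_p[\Gal(E/F)]$-module $E^\times/(E^\times)^p$, where Lemma~\ref{lem:operator} (and its $a\leftrightarrow c$ analogue) and Lemma~\ref{lem:modification} give, modulo $(E^\times)^p$, the relations $\sigma_a(A)\equiv bA$, $\sigma_c(A)\equiv A$, $\sigma_c(C)\equiv bC$, $\sigma_a(C)\equiv C$, $\sigma_a(\delta)\equiv C\delta$, $\sigma_c(\delta)\equiv A\delta$ (the factors $\alpha^p,\gamma^p,C_1^p,C_2^p$ being $p$-th powers). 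If $b^iA^jC^k\delta^\ell\in(E^\times)^p$, then applying $\sigma_a-1$ and $\sigma_c-1$ yields $b^jC^\ell,\ b^kA^\ell\in(E^\times)^p$, and a further application of $\sigma_c-1$ gives $b^\ell\in(E^\times)^p$; since $a,b,c$ are independent modulo $(F^\times)^p$ we have $[F(\sqrt[p]{a},\sqrt[p]{b},\sqrt[p]{c}):F]=p^3$, hence $b\notin(E^\times)^p$, forcing $\ell=0$, then $j=k=0$, and finally $i=0$. Thus the entire degree computation rests on the single nonvanishing statement $b\notin(E^\times)^p$.

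For (ii), I would extend $\sigma_a$ (resp. $\sigma_c$) up the tower $E\subset E(\sqrt[p]{b})\subset\cdots\subset M$, at each stage sending the new radical to the root prescribed by the transformation formulas: $\sigma_a(\sqrt[p]{A})=\sqrt[p]{A}\,\sqrt[p]{b}/\alpha$ and $\sigma_a(\sqrt[p]{\delta})=\sqrt[p]{\delta}\,\sqrt[p]{C}/C_2$, with $\sigma_a$ fixing $\sqrt[p]{b},\sqrt[p]{c},\sqrt[p]{C}$, and symmetrically $\sigma_c(\sqrt[p]{C})=\sqrt[p]{C}\,\sqrt[p]{b}/\gamma$ and $\sigma_c(\sqrt[p]{\delta})=\sqrt[p]{\delta}\,\sqrt[p]{A}/C_1$, with $\sigma_c$ fixing $\sqrt[p]{a},\sqrt[p]{b},\sqrt[p]{A}$; and I take $\sigma_b$ to fix $E$, send $\sqrt[p]{b}\mapsto\xi\sqrt[p]{b}$, and fix $\sqrt[p]{A},\sqrt[p]{C},\sqrt[p]{\delta}$. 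Each prescribed root lies in $M$, so the extensions exist and every $F$-conjugate of a radical stays in $M$; hence $M/F$ is normal, and therefore Galois. Here the choices $A=N_{\sigma_c}(C_1)$ and $C=N_{\sigma_a}(C_2)$, together with $N_{F(\sqrt[p]{a})/F}(\alpha)=N_{F(\sqrt[p]{c})/F}(\gamma)=b$, are exactly what makes $\sigma_a,\sigma_c$ have order $p$: for instance $\sigma_a^p(\sqrt[p]{\delta})=\sqrt[p]{\delta}\,C/N_{\sigma_a}(C_2)=\sqrt[p]{\delta}$.

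Finally, for (iii), I would compute the commutators acting on the six radicals. Writing $\tau_A=[\sigma_a,\sigma_b]$ and $\tau_C=[\sigma_b,\sigma_c]$, the Heisenberg computation of Proposition~\ref{prop:Heisenberg extension} gives $\tau_A(\sqrt[p]{A})=\xi^{-1}\sqrt[p]{A}$ and $\tau_C(\sqrt[p]{C})=\xi^{-1}\sqrt[p]{C}$, both fixing every other radical. The delicate relation is $[\sigma_a,\sigma_c]=\id$: on all radicals except $\sqrt[p]{\delta}$ it is clear, while on $\sqrt[p]{\delta}$ a direct computation reduces it to the identity $B=\frac{\sigma_a(C_1)}{C_1}\frac{C_2}{\sigma_c(C_2)}$, which is precisely the hypothesis of Lemma~\ref{lem:modification}; this is the compatibility that the modification of $A_0,C_0$ was engineered to produce, and I expect verifying it (and the analogous bookkeeping in the other commutators) to be the main obstacle. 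The remaining length-two commutators vanish, and the triple commutator $[[\sigma_a,\sigma_b],\sigma_c]=[\tau_A,\sigma_c]$ acts as $\xi^{-1}$ on $\sqrt[p]{\delta}$ and trivially elsewhere — here the factor $C_1$ in $\sigma_c(\sqrt[p]{\delta})=\sqrt[p]{\delta}\,\sqrt[p]{A}/C_1$ cancels against the $\xi^{-1}$-scaling of $\sqrt[p]{A}$ by $\tau_A$ — so it is nontrivial. These relations match the standard presentation of $\U_4(\F_p)$ and yield a homomorphism $\phi\colon\U_4(\F_p)\to\Gal(M/F)$; since any nontrivial normal subgroup of this $p$-group meets the center, on whose generator $\phi$ is nonzero (the triple commutator being nontrivial), $\phi$ is injective, and by (i) its image has order $p^6=[M:F]$, so $\phi$ is an isomorphism. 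The containment $M\supseteq F(\sqrt[p]{a},\sqrt[p]{b},\sqrt[p]{c})$ is immediate.
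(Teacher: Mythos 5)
Your proposal is correct, and most of it runs parallel to the paper's proof: the same degree count $[M:F]=p^6$ via the action of $\sigma_a-1,\sigma_c-1$ on $\langle[b]_E,[A]_E,[C]_E,[\delta]_E\rangle\subseteq E^\times/(E^\times)^p$ (your exponent-killing argument is the paper's dimension claim run in reverse), essentially the same normalized automorphisms $\sigma_a,\sigma_b,\sigma_c$ (the paper produces them via the fiber product $\Gal(H^{b,c}/F)\times_{\Gal(F(\sqrt[p]{b})/F)}\Gal(H^{a,b}/F)$ rather than your tower-by-tower extension, and it tolerates the root-of-unity ambiguities $\xi^i,\xi^j$ in $\sigma_a(\sqrt[p]{\delta}),\sigma_c(\sqrt[p]{\delta})$ instead of normalizing them away), and the same verification of the Biss--Dasgupta relations, with $[\sigma_a,\sigma_c]=1$ reducing, exactly as you say, to the hypothesis of Lemma~\ref{lem:modification}. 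The genuine divergence is the endgame. The paper proves that $\sigma_a,\sigma_b,\sigma_c$ generate $\Gal(M/F)$ --- a separate claim requiring the computation $(F^\times\cap(M^\times)^p)/(F^\times)^p=\langle[a]_F,[b]_F,[c]_F\rangle$ together with the Burnside basis theorem --- so that the presentation yields a surjection $\U_4(\F_p)\to\Gal(M/F)$, which the order count makes bijective. You instead prove the homomorphism $\phi\colon\U_4(\F_p)\to\Gal(M/F)$ injective: since $Z(\U_4(\F_p))$ is cyclic of order $p$ generated by $[[s_1,s_2],s_3]$, and any nontrivial kernel, being normal in a $p$-group, would contain it, injectivity follows from $[[\sigma_a,\sigma_b],\sigma_c]\neq1$, which holds because this element multiplies $\sqrt[p]{\delta}$ by $\xi^{-1}$; surjectivity then comes free from $|\Gal(M/F)|=p^6$. (Your computation here is correct in substance, though the parenthetical about the factor $C_1$ cancelling against the $\xi^{-1}$-scaling is muddled: in the commutator the $C_1$'s and the $\sqrt[p]{A}$'s cancel against themselves, and the surviving $\xi^{-1}$ comes from $\tau_A^{\pm1}$ hitting $\sqrt[p]{A}^{\mp1}$.) This trade --- one extra explicit commutator computation in place of the maximal-elementary-subextension computation plus Burnside --- makes your proof somewhat shorter, and that computation is morally forced anyway, since a nontrivial image of the center is exactly what separates $\Gal(M/F)\simeq\U_4(\F_p)$ from a proper quotient. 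Both routes rest on the same two pillars, $[M:F]=p^6$ and the relations, so neither is more general than the other.
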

\begin{proof}
Let $W^*$ be the $\F_p$-vector space in $E^\times/(E^\times)^p$ generated by $[b]_E,[A]_E,[C]_E$ and $[\delta]_E$. Here for any $0\not= x$ in a field $L$, we denote $[x]_L$ the image of $x$ in $L^\times/(L^\times)^p$. Since 
\begin{align}
\sigma_c(\delta)&= \delta A C_1^{-p} & \text{ (by Lemma~\ref{lem:modification})}, \label{eq:1}\\
\sigma_a(\delta)&=\delta C C_2^{-p} &\text{ (by Lemma~\ref{lem:modification})},\label{eq:2}\\
\sigma_a(A)&=A \frac{b}{\alpha^p} \label{eq:3} &\text{ (by Lemma~\ref{lem:operator})},\\
\sigma_c(C)&=C \frac{b}{\gamma^p} \label{eq:4} &\text{ (by Lemma~\ref{lem:operator})},
\end{align}
we see that $W^*$ is in fact an $\F_p[G]$-module. Hence $M/F$ is a Galois extension by Kummer theory.
\\
\\
{\bf Claim:} $\dim_{\F_p}(W^*)=4$. Hence $[L:F]=[L:E][E:F]=p^4p^2=p^6.$\\
{\it Proof of Claim:} From our hypothesis that $\dim_{\F_p}\langle [a]_F,[b]_F,[c]_F\rangle=3$, we see that $\langle [b]_E\rangle \simeq \F_p$. 

Clearly, $\langle[b]_E\rangle \subseteq (W^*)^G$.
From \eqref{eq:3} one gets the relation 
\[
[\sigma_a(A)]_E= [A]_E [b]_E.
\]
This implies that $[A]_E$  is not in $(W^*)^G$. Hence $\dim_{\F_p}\langle [b]_E,[A]_E\rangle=2$.

From \eqref{eq:4} one gets the relation 
\[  [\sigma_c(C)]_E= [C]_E [b]_E.
\]
This implies that $[C]_E$ is not in $(W^*)^{\sigma_c}$. But we have $\langle [b]_E,[A]_E\rangle\subseteq (W^*)^{\sigma_c}$. Hence
\[
\dim_{\F_p}\langle [b]_E,[A]_E,[C]_E\rangle =3.
\]

Observe that the element $(\sigma_a-1)(\sigma_c-1)$ annihilates the $\F_p[G]$-module $\langle [b]_E,[A]_E,[C]_E\rangle$, while by  \eqref{eq:1} and \eqref{eq:3} one has
\[
(\sigma_a-1)(\sigma_c-1)[\delta]_E= \frac{\sigma_a([A]_E)}{[A]_E}= [b]_E.
\]
Therefore one has
\[
\dim_{\F_p}W^*=\dim_{\F_p}\langle [b]_E,[A]_E,[C]_E,[\delta]_E\rangle =4.
\]
\\
\\
Let $H^{a,b}=F(\sqrt[p]{a},\sqrt[p]{A},\sqrt[p]{b})$ and $H^{b,c}=F(\sqrt[p]{c},\sqrt[p]{C},\sqrt[p]{b})$. 
Let 
\[N:=H^{a,b}H^{b,c}=F(\sqrt[p]{a},\sqrt[p]{c},\sqrt[p]{b},\sqrt[p]{A},\sqrt[p]{C})=E(\sqrt[p]{b},\sqrt[p]{A},\sqrt[p]{C}).\] 
Then $N/F$ is a Galois extension of degree $p^5$. 
This is because  ${\rm Gal}(N/E)$ is dual to the $\F_p[G]$-submodule $\langle  [b]_E,[A]_E,[C]_E \rangle$ via Kummer theory, 
and the proof of the claim above shows that $\dim_{\F_p}\langle [b]_E,[A]_E,[C]_E\rangle =3$. We have the following commutative diagram
\[
\xymatrix{
{\rm Gal}(N/F) \ar@{->>}[r] \ar@{->>}[d] & {\rm Gal}(H^{a,b}/F) \ar@{->>}[d]\\
{\rm Gal}(H^{b,c}/F) \ar@{->>}[r] & {\rm Gal}(F(\sqrt[p]{b})/F).
}
\]
So we have a homomorphism $\eta$ from ${\rm Gal}(N/F)$ to the pull-back ${\rm Gal}(H^{b,c}/F)\times_{{\rm Gal}(F(\sqrt[p]{b})/F)} {\rm Gal}(H^{a,b}/F)$:
\[
\eta\colon {\rm Gal}(N/F) \longrightarrow {\rm Gal}(H^{b,c}/F)\times_{{\rm Gal}(F(\sqrt[p]{b})/F)} {\rm Gal}(H^{a,b}/F),
\]
which make the obvious diagram commute.
We claim that $\eta$ is injective. Indeed, let $\sigma$ be an element in $\ker\eta$. Then $\sigma\mid_{H^{a,b}}=1$ in ${\rm Gal}(H^{a,b}/F)$, and $\sigma\mid_{H^{b,c}}=1$ in ${\rm Gal}(H^{b,c}/F)$. Since $N$ is the compositum of $H^{a,b}$ and $H^{b,c}$, this implies that $\sigma=1$, as desired. 

Since $|{\rm Gal}(H^{b,c}/F)\times_{{\rm Gal}(F(\sqrt[p]{b})/F)} {\rm Gal}(H^{a,b}/F)|=p^5=|{\rm Gal}(N/F)|$, we see that $\eta$ is actually an isomorphism.
As in the proof of Proposition~\ref{prop:Heisenberg extension}, we can choose an extension $\sigma_a\in {\rm Gal}(H^{a,b}/F)$ of $\sigma_a\in {\rm Gal}(F(\sqrt[p]{a},\sqrt[p]{b})/F)$ (more precisely, of $\sigma_a\hspace*{-5pt}\mid_{F(\sqrt[p]{a},\sqrt[p]{b})}\in {\rm Gal}(F(\sqrt[p]{a},\sqrt[p]{b})/F)$) in such a way that 
\[
\sigma_a(\sqrt[p]{A})= \sqrt[p]{A} \frac{\sqrt[p]{b}}{\alpha}.
\]
Since the square commutative diagram above is a pull-back, we can choose an extension  $\sigma_a\in {\rm Gal}(N/F)$ of $\sigma_a\in {\rm Gal}(H^{a,b}/F)$ in such a way that
\[
 \sigma_a\mid_{H^{b,c}}=1.
\]
Now we can choose any extension $\sigma_a\in {\rm Gal}(M/F)$ of $\sigma_a\in{\rm Gal}(N/F)$. Then we have
\[
\sigma_a(\sqrt[p]{A})= \sqrt[p]{A} \frac{\sqrt[p]{b}}{\alpha} \; \text{ and }  \sigma_a\mid_{H^{b,c}}=1.
\]

Similarly, we can choose an extension  $\sigma_c\in {\rm Gal}(M/F)$ of $\sigma_c\in {\rm Gal}(F(\sqrt[p]{b},\sqrt[p]{c})/F)$ in such a way that
\[
\sigma_c(\sqrt[p]{C})= \sqrt[p]{C} \frac{\sqrt[p]{b}}{\gamma}, \text{ and } \sigma_c\mid_{H^{a,b}}=1.
\]

We define $\sigma_b\in {\rm Gal}(M/E)$ to be the element which is dual to $[b]_E$ via Kummer theory. In other words, we require that
\[
\sigma_b(\sqrt[p]{b})=\xi \sqrt[p]{b},
\]
and $\sigma_b$ acts trivially on $\sqrt[p]{A}$, $\sqrt[p]{C}$ and $\sqrt[p]{\delta}$. We consider $\sigma_b$ as an element in ${\rm Gal}(M/F)$, then it is clear that $\sigma_b$ is an extension of $\sigma_b\in {\rm Gal}(F(\sqrt[p]{a},\sqrt[p]{b},\sqrt[p]{c})/F)$.
Let $W={\rm Gal}(M/E)$, and let $H={\rm Gal}(M/F)$, then we have the following exact sequence
\[
1\to W\to H\to G\to 1.
\]
By Kummer theory, it follows that $W$ is dual to $W^*$, and hence $W\simeq (\Z/p\Z)^4$. In particular, we have $|H|=p^6$.

Recall that from \cite[Theorem 1]{BD}, we know that the group $\U_4(\F_p)$ has a presentation with generators $s_1,s_2,s_3$ subject to the following relations
\[
\label{eq:R}
\tag{R}
\begin{aligned}
s_1^p=s_2^p=s_3^p=1,\\
[s_1,s_3]=1,\\
[s_1,[s_1,s_2]]=[s_2,[s_1,s_2]]=1,\\
[s_2,[s_2,s_3]]=[s_3,[s_2,s_3]]=1,\\
[[s_1,s_2],[s_2,s_3]]=1.
\end{aligned}
\]
Note that  $|\Gal(M/F)|=p^6$.
So in order to show that $\Gal(M/F)\simeq \U_4(\F_p)$, we shall show that $\sigma_a,\sigma_b$ and $\sigma_c$ generate $\Gal(M/F)$ and that they satisfy these above relations.
\\
\\
{\bf Claim:} The elements $\sigma_a,\sigma_b$ and $\sigma_c$ generate $\Gal(M/F)$.\\
{\it Proof of Claim:} Let $K$ be the maximal $p$-elementary subextension of $M$. Note that $\Gal(M/F)$ is a $p$-group. So in order to show that $\sigma_a,\sigma_b$ and $\sigma_c$ generate $\Gal(M/F)$, we only need to show that (the restrictions of) these elements generate $\Gal(K/F)$ by the Burnside basis theorem. (See e.g. \cite[Theorem 12.2.1]{Ha} or \cite[Theorem 4.10]{Ko}.)
 We shall now determine the field $K$.
By Kummer theory, $K=F(\sqrt[p]{\Delta})$, where $\Delta= (F^\times\cap {M^\times}^p)/(F^\times)^p$. 
Let $[f]_F$ be any element in $\Delta$, where $f\in F^\times \cap (M^\times)^p\subseteq E^\times \cap  (M^\times)^p$. 
By Kummer theory, one has
$
W^*=(E^\times \cap (M^\times)^p)/(E^\times)^p. 
$ 
Hence we can write
\[
[f]_E=[\delta]_E^{\epsilon_\delta} [A]_E^{\epsilon_A} [C]_E^{\epsilon_C} [b]_E^{\epsilon_b},
\]
where $\epsilon_\delta,\epsilon_A,\epsilon_C,\epsilon_b\in \Z$. By applying $(\sigma_a-1)(\sigma_c-1)$ on $[f]_E$ we get
$
[1]_E=[b]_E^{\epsilon_\delta}.
$
(See the proof of the first claim of this proof.)
Thus $\epsilon_\delta $ is divisible by $p$, and one has 
\[
[f]_E=[A]_E^{\epsilon_A} [C]_E^{\epsilon_C} [b]_E^{\epsilon_b}.
\]  
By applying $\sigma_a-1$ on both sides of this equation, we get
$
[1]_E=[b]_E^{\epsilon_A}.
$
Thus $\epsilon_A$ is divisible by $p$.  Similarly, $\epsilon_C$ is also divisible by $p$. Hence $f=b^{\epsilon_b}e^p$ for some $e\in E$. Since $b$ and $f$ are in $F$, $e^p$ is in $F^\times \cap (E^\times)^p$ and $[e^p]_F$ is in $\langle [a]_F,[c]_F\rangle$. Therefore $[f]_F$ is in $\langle [a]_F,[b]_F,[c]_F\rangle$ and 
\[
\Delta= \langle [a]_F,[b]_F,[c]_F\rangle.
\]
So $K=F(\sqrt[p]{a},\sqrt[p]{b},\sqrt[p]{c})$. Then it is clear that $\sigma_a,\sigma_b$ and $\sigma_c$ generate $\Gal(K/F)$ and the claim follows.
\\
\\
{\bf Claim:} The order of $\sigma_a$ is $p$.\\
{\it Proof of Claim:}
As in the proof of Proposition~\ref{prop:Heisenberg extension}, we see that $\sigma_a^p(\sqrt[p]{A})=\sqrt[p]{A}$.

Since $\sigma_a(\delta)=\delta C C_2^{-p}$ (equation~\eqref{eq:2}), one has $\sigma_a(\sqrt[p]{\delta})=\xi^i \sqrt[p]{\delta} \sqrt[p]{C} C_2^{-1}$ for some $i\in \Z$. This implies that
\[
\begin{aligned}
\sigma_a^2(\sqrt[p]{\delta})&= \xi^i\sigma_a(\sqrt[p]{\delta}) \sigma_a(\sqrt[p]{C}) \sigma_a(C_2)^{-1}\\
&= \xi^{2i} \sqrt[p]{\delta} (\sqrt[p]{C})^2 C_2^{-1}\sigma_a(C_2)^{-1}.
\end{aligned}
\]
Inductively, we obtain
\[
\begin{aligned}
\sigma_a^{p}(\sqrt[p]{\delta})&= \xi^{pi}\sqrt[p]{\delta}(\sqrt[p]{C})^p N_{\sigma_a}(C_2)^{-1}\\
&= \sqrt[p]{\delta}(C )N_{\sigma_a}(C_2)^{-1}\\
& =\sqrt[p]{\delta}. 
\end{aligned}
\]
Therefore, we can conclude that $\sigma_a^p=1$, and $\sigma_a$ is of order $p$. 
\\
\\
{\bf Claim:} The order of $\sigma_b$ is $p$.\\
{\it Proof of Claim:} This is clear because $\sigma_b$ acts trivially on $\sqrt[p]{A},\sqrt[p]{C}, \delta$, $E$ and $\sigma_b(\sqrt[p]{b})=\xi\sqrt[p]{b}$.
\\
\\
{\bf Claim:} The order of $\sigma_c$ is $p$.\\
{\it Proof of Claim:} As in the proof of Proposition~\ref{prop:Heisenberg extension}, we see that $\sigma_c^p(\sqrt[p]{C})=\sqrt[p]{C}$.
 
Since $\sigma_c(\delta)=\delta A C_1^{-p}$ (equation~\eqref{eq:1}), one has $\sigma_c(\sqrt[p]{\delta})=\xi^j \sqrt[p]{\delta} \sqrt[p]{A} C_1^{-1}$ for some $j\in \Z$. This implies that
\[
\begin{aligned}
\sigma_c^2(\sqrt[p]{\delta})&= \xi^j\sigma_c(\sqrt[p]{\delta}) \sigma_c(\sqrt[p]{A}) \sigma_c(C_1)^{-1}\\
&= \xi^{2j} \sqrt[p]{\delta} (\sqrt[p]{A})^2 C_1^{-1}\sigma_c(C_1)^{-1}.
\end{aligned}
\]
Inductively, we obtain
\[
\begin{aligned}
\sigma_c^{p}(\sqrt[p]{\delta})&= \xi^{pj}\sqrt[p]{\delta}(\sqrt[p]{A})^p N_{\sigma_c}(C_1)^{-1}\\
&= \sqrt[p]{\delta}(A )N_{\sigma_c}(C_1)^{-1}\\
& = \sqrt[p]{\delta}.
\end{aligned}
\]
Therefore, we can conclude that $\sigma_c^p=1$, and $\sigma_c$ is of order $p$.   
\\
\\
{\bf Claim:} $[\sigma_a,\sigma_c]=1$.\\
{\it Proof of Claim:}  
It is enough to check that $\sigma_a\sigma_c(\sqrt[p]{\delta})=\sigma_c\sigma_a(\sqrt[p]{\delta})$.

We have
\[
\begin{aligned}
\sigma_a\sigma_c(\sqrt[p]{\delta})&= \sigma_a(\xi^j \sqrt[p]{\delta} \sqrt[p]{A} C_1^{-1})\\
&= \xi^j \sigma_a(\sqrt[p]{\delta})\sigma_a(\sqrt[p]{A})\sigma_a(C_1)^{-1}\\
&= \xi^j \xi^i \sqrt[p]{\delta} \sqrt[p]{C} C_2^{-1} \sqrt[p]{A} \frac{\sqrt[p]{b}}{\alpha}\sigma_a(C_1)^{-1}\\
&=\xi^{i+j} \sqrt[p]{\delta} \sqrt[p]{C} \sqrt[p]{A} \frac{\sqrt[p]{b}}{\alpha} (\sigma_a(C_1)C_2)^{-1}\\
&= \xi^{i+j} \sqrt[p]{\delta} \sqrt[p]{C} \sqrt[p]{A} \frac{\sqrt[p]{b}}{\alpha} \frac{(C_1\sigma_c(C_2))^{-1}}{B}\\
&=\xi^{i+j} \sqrt[p]{\delta} \sqrt[p]{C} \sqrt[p]{A} \frac{\sqrt[p]{b}}{\gamma}(C_1\sigma_c(C_2))^{-1}.
\end{aligned}
\]
On the other hand, we have
\[
\begin{aligned}
\sigma_c\sigma_a(\sqrt[p]{\delta})&= \sigma_c(\xi^i \sqrt[p]{\delta} \sqrt[p]{C} C_2^{-1})\\
&= \xi^i \sigma_c(\sqrt[p]{\delta})\sigma_c(\sqrt[p]{C})\sigma_c(C_2)^{-1}\\
&= \xi^i \xi^j \sqrt[p]{\delta} \sqrt[p]{A} C_1^{-1} \sqrt[p]{C} \frac{\sqrt[p]{b}}{\gamma}\sigma_c(C_2)^{-1}\\
&=\xi^{i+j} \sqrt[p]{\delta} \sqrt[p]{A} \sqrt[p]{C} \frac{\sqrt[p]{b}}{\gamma} (C_1\sigma_c(C_2))^{-1}.
\end{aligned}
\]
Therefore, $\sigma_a\sigma_c(\sqrt[p]{\delta})=\sigma_c\sigma_a(\sqrt[p]{\delta})$, as desired. 
\\
\\
{\bf Claim:} $[\sigma_a,[\sigma_a,\sigma_b]]=[\sigma_b,[\sigma_a,\sigma_b]]=1$.\\
{\it Proof of Claim:} Since $G$ is abelian, it follows that $[\sigma_a,\sigma_b]$ is in $W$. Now both $\sigma_b$ and $[\sigma_a,\sigma_b]$ are in $W$. Hence $[\sigma_b,[\sigma_a,\sigma_b]]=1$ because $W$ is abelian.

Now we show that $[\sigma_a,[\sigma_a,\sigma_b]]=1$.
 Since the Heisenberg group $\U_3(\F_p)$ is a nilpotent group of nilpotent length 2, we see that $[\sigma_a,[\sigma_a,\sigma_b]]=1$ on $H^{a,b}$ and $H^{b,c}$. So it is enough to check that $[\sigma_a,[\sigma_a,\sigma_b]](\sqrt[p]{\delta})=\sqrt[p]{\delta}$.

From the choice of $\sigma_b$, we see that
\[
\sigma_b\sigma_a(\sqrt[p]{\delta})=\sigma_a(\sqrt[p]{\delta})= \sigma_a\sigma_b(\sqrt[p]{\delta}).
\]
Hence, $[\sigma_a,\sigma_b](\sqrt[p]{\delta})=\sqrt[p]{\delta}$. Since $\sigma_a$ and $\sigma_b$ act trivially on $\sqrt[p]{C}$, and $\sigma_b$ acts trivially on $E$, we see that
\[
[\sigma_a,\sigma_b](\sqrt[p]{C})=\sqrt[p]{C},\;\; \text{ and } [\sigma_a,\sigma_b](C_2^{-1})=C_2^{-1}.
\]
We have 
\[
\begin{aligned}
{[\sigma_a,\sigma_b]}\sigma_a (\sqrt[p]{\delta})&=[\sigma_a,\sigma_b](\xi^i\sqrt[p]{\delta}\sqrt[p]{C}C_2^{-1})\\
&= [\sigma_a,\sigma_b](\xi^i) [\sigma_a,\sigma_b](\sqrt[p]{\delta}) [\sigma_a,\sigma_b](\sqrt[p]{C}) [\sigma_a,\sigma_b](C_2^{-1})\\
&= \xi^i \sqrt[p]{\delta}\sqrt[p]{C}C_2^{-1}\\
&=\sigma_a(\sqrt[p]{\delta})\\
&=\sigma_a[\sigma_a,\sigma_b](\sqrt[p]{\delta}).
\end{aligned}
\]
Thus $[\sigma_a,[\sigma_a,\sigma_b]](\sqrt[p]{\delta})=\sqrt[p]{\delta}$, as desired. 
\\
\\
{\bf Claim:} $[\sigma_b,[\sigma_b,\sigma_c]]=[\sigma_c,[\sigma_b,\sigma_c]]=1$.\\
{\it Proof of Claim:} Since $G$ is abelian, it follows that $[\sigma_b,\sigma_c]$ is in $W$. Now both $\sigma_b$ and $[\sigma_b,\sigma_c]$ are in $W$. Hence $[\sigma_b,[\sigma_b,\sigma_c]]=1$ because $W$ is abelian.

Now we show that $[\sigma_c,[\sigma_b,\sigma_c]]=1$. Since the Heisenberg group $\U_3(\F_p)$ is a nilpotent group of nilpotent length 2, we see that $[\sigma_c,[\sigma_b,\sigma_c]]=1$ on $H^{a,b}$ and $H^{b,c}$. So it is enough to check that $[\sigma_c,[\sigma_b,\sigma_c]](\sqrt[p]{\delta})=\sqrt[p]{\delta}$.

From the choice of $\sigma_b$, we see that
\[
\sigma_b\sigma_c(\sqrt[p]{\delta})=\sigma_c(\sqrt[p]{\delta})= \sigma_c\sigma_b(\sqrt[p]{\delta}).
\]
Hence,  $[\sigma_b,\sigma_c](\sqrt[p]{\delta})=\sqrt[p]{\delta}$. 
Since $\sigma_b$ and $\sigma_c$ act trivially on $\sqrt[p]{A}$, and $\sigma_b$ acts trivially on $E$, we see that
\[
[\sigma_b,\sigma_c](\sqrt[p]{A})=\sqrt[p]{A},\;\; \text{ and } [\sigma_b,\sigma_c](C_1^{-1})=C_1^{-1}.
\]
We have 
\[
\begin{aligned}
{[\sigma_b,\sigma_c]}\sigma_c (\sqrt[p]{\delta})&=[\sigma_b,\sigma_c](\xi^j\sqrt[p]{\delta}\sqrt[p]{A}C_1^{-1})\\
&= [\sigma_b,\sigma_c](\xi^j) [\sigma_b,\sigma_c](\sqrt[p]{\delta}) [\sigma_b,\sigma_c](\sqrt[p]{A}) [\sigma_b,\sigma_c](C_1^{-1})\\
&= \xi^j \sqrt[p]{\delta}\sqrt[p]{A}C_1^{-1}\\
&=\sigma_c(\sqrt[p]{\delta})\\
&=\sigma_c[\sigma_a,\sigma_b](\sqrt[p]{\delta}).
\end{aligned}
\]
Thus $[\sigma_c,[\sigma_b,\sigma_c]](\sqrt[p]{\delta})=\sqrt[p]{\delta}$, as desired. 
\\
\\
{\bf Claim:} $[[\sigma_a,\sigma_b],[\sigma_b,\sigma_c]]=1$.\\
{\it Proof of Claim}: Since $G$ is abelian,  $[\sigma_a,\sigma_b]$ and $[\sigma_b,\sigma_c]$ are in $W$. Hence $[[\sigma_a,\sigma_b],[\sigma_b,\sigma_c]]=1$ because $W$ is abelian.

  An explicit isomorphism $\varphi\colon {\rm Gal}(M/F)\to \U_4(\F_p)$ may be defined as
\[
\sigma_a \mapsto \begin{bmatrix}
1& 1 & 0 & 0\\
0& 1 & 0 & 0\\
0& 0 & 1 & 0\\
0& 0 & 0 & 1
\end{bmatrix}, \; \;
\sigma_b\mapsto  \begin{bmatrix}
1& 0 & 0 & 0\\
0& 1 & 1 & 0\\
0& 0 & 1 & 0\\
0& 0 & 0 & 1
\end{bmatrix}, \;\;
 \sigma_c\mapsto \begin{bmatrix}
1& 0 & 0 & 0\\
0& 1 & 0 & 0\\
0& 0 & 1 & 1\\
0& 0 & 0 & 1
\end{bmatrix}.
\]
\end{proof}
\begin{ex}
\label{ex:p=2}
  Let the notation and assumption be as in Lemma~\ref{lem:modification}.  Let us consider the case $p=2$. In Lemma~\ref{lem:C1C2}, we can choose $e=\dfrac{\alpha}{\alpha+\gamma}$. (Observe that $\alpha+\gamma\not=0$.) In fact, one can easily check that
\[
\sigma_a\sigma_c(\frac{\alpha}{\alpha+\gamma})=\frac{\gamma}{\alpha} \frac{\alpha}{\alpha+\gamma}.
\] 
\\
\\
(1) If we choose $C_1=\sigma_c(e)$ and  $C_2=e^{-1}$ as in Lemma~\ref{lem:C1C2} part (1), then  we have
\[
\begin{aligned}
A=N_{\sigma_c}(C_1)&=N_{\sigma_c}(e)= \frac{\alpha^2\gamma}{(\alpha+\gamma)(\alpha\gamma+b)},\\
C= N_{\sigma_a}(C_2)&=N_{\sigma_a}(e^{-1})=\frac{(\alpha+\gamma)(\alpha\gamma+b)}{b\alpha}.
\end{aligned}
\]

In Lemma~\ref{lem:modification}, we can choose $\delta=e^{-1}=\dfrac{\alpha+\gamma}{\alpha}$. In fact, we have
\[
\begin{aligned}
\frac{\sigma_c(\delta)}{\delta}&=\sigma_c(e)^{-1}e= \sigma_c(e)^{-2} e\sigma_c(e)= C_1^{-2} N_{\sigma_c}(e)= AC_1^{-2},\\
\frac{\sigma_a(\delta)}{\delta}&=\sigma_a(e^{-1})e= e^{-1}\sigma_a(e^{-1}) e^2= N_{\sigma_a}(e^{-1})C_2^{-2}= CC_2^{-2}.
\end{aligned}
\]
Therefore
\[
\begin{aligned}
M&= F(\sqrt{b},\sqrt{A},\sqrt{C},\sqrt{\delta})= F(\sqrt{b},\sqrt{ \frac{\alpha^2\gamma}{(\alpha+\gamma)(\alpha\gamma+b)}},\sqrt{\frac{(\alpha+\gamma)(\alpha\gamma+b)}{b\alpha}},\sqrt{\dfrac{\alpha+\gamma}{\alpha}})\\
&=F(\sqrt{b},\sqrt{\frac{\alpha+\gamma}{\alpha}},\sqrt{\alpha\gamma+b},\sqrt{\alpha\gamma}).
\end{aligned}
\]
\\
\\
(2) If we choose $C_1=e=\dfrac{\alpha}{\alpha+\gamma}$ and  $C_2=eB=\dfrac{\gamma}{\alpha+\gamma}$ as in Lemma~\ref{lem:C1C2} part (2), then  we have
\[
\begin{aligned}
A=N_{\sigma_c}(C_1)&=N_{\sigma_c}(e)= \frac{\alpha^2\gamma}{(\alpha+\gamma)(\alpha\gamma+b)},\\
C= N_{\sigma_a}(C_2)&=N_{\sigma_a}(eB)=\frac{\gamma^2\alpha}{(\alpha+\gamma)(\alpha\gamma+b)}.
\end{aligned}
\]

In Lemma~\ref{lem:modification}, we can choose $\delta=(\alpha+\gamma)^{-1}$. In fact, we have
\[
\begin{aligned}
\frac{\sigma_c(\delta)}{\delta}&=\frac{\gamma(\alpha+\gamma)}{\alpha\gamma+b}= AC_1^{-2},\\
\frac{\sigma_a(\delta)}{\delta}&=\frac{\alpha(\alpha+\gamma)}{\alpha\gamma+b}= CC_2^{-2}.
\end{aligned}
\]
Therefore
\[
M= F(\sqrt{b},\sqrt{A},\sqrt{C},\sqrt{\delta})= F(\sqrt{b},\sqrt{\frac{\alpha^2\gamma}{\alpha\gamma+b}},\sqrt{\frac{\alpha\gamma^2}{\alpha\gamma+b}},\sqrt{\alpha+\gamma}).
\]
Observe also that $M$ is  the Galois closure of $E(\sqrt{\delta})=F(\sqrt{a},\sqrt{c},\sqrt{\alpha+\gamma})$.
\end{ex}
%%%%%%%%%%%%%%%%%%%%%%%%%%%%%%%%%%%%%%%%%%%%%%%%%%%%%
%%%%%%%%%%%%%%%%%%%%%%%%%%%%%%%%%%%%%%%%%%%%%%%%%%%%
\subsection{Fields of characteristic not $p$}
Let $F_0$ be an arbitrary field of characteristic $\not=p$. We fix a primitive $p$-th root of unity $\xi$, and let $F=F_0(\xi)$. Then $F/F_0$ is a cyclic extension of degree $d=[F:F_0]$. Observe that $d$ divides $p-1$. We choose an integer $\ell$ such that $d\ell\equiv 1\bmod p$.
Let $\sigma_0$ be a generator of $H:={\rm Gal}(F/F_0)$. Then $\sigma_0(\xi)=\xi^e$ for an $e\in \Z\setminus p\Z$.

Let $\chi_1,\chi_2,\chi_3$ be elements in ${\rm Hom}(G_{F_0},\F_p)=H^1(G_{F_0},\F_p)$. We assume that   $\chi_1,\chi_2,\chi_3$ are $\F_p$-linearly independent and 
$\chi_1 \cup \chi_2 = \chi_2 \cup \chi_3 =0$.  By \cite[Lemma 2.6]{MT4}, the homomorphism  $(\chi_1,\chi_2,\chi_3)\colon G_{F_0}\to (\F_p)^3$ is surjective. 
  Let $L_0$ be the fixed field of $(F_0)^s$ under the kernel of the surjection $(\chi_1,\chi_2,\chi_3)\colon G_{F_0}\to (\F_p)^3$. Then $L_0/F_0$ is Galois with ${\rm Gal}(L_0/F_0)\simeq (\F_p)^3$. We shall construct a Galois extension $M_0/F_0$ such that ${\rm Gal}(M_0/F_0)\simeq \U_4(\F_p)$ and $M_0$ contains $L_0$.

The restrictions $\res_{G_F}(\chi_1),\res_{G_F}(\chi_2),\res_{G_F}(\chi_3)$ are elements in  ${\rm Hom}(G_F,\F_p)$. They are $\F_p$-linearly independent and 
$\res_{G_F}(\chi_1) \cup \res_{G_F}(\chi_2) =\res_{G_F}(\chi_2) \cup\res_{G_F}(\chi_3) =0$. 
By Kummer theory there exist $a,b,c$ in $F^\times$ such that $\res_{G_F}(\chi_1)=\chi_a$, $\res_{G_F}(\chi_2)=\chi_b$, $\res_{G_F}(\chi_3)=\chi_c$. Then we have
$(a,b)=(b,c)=0$ in $H^2(G_F,\F_p)$.

Let $L=L_0(\xi)$. Then $L=F(\sqrt[p]{a},\sqrt[p]{b},\sqrt[p]{c})$, and  $L/F$ is Galois with ${\rm Gal}(L/F)\simeq {\rm Gal}(L_0/F_0)\simeq (\F_p)^3$.
\\
\\
{\bf Claim 1:} $L/F_0$ is Galois with ${\rm Gal}(L/F_0)\simeq {\rm Gal}(F/F_0)\times {\rm Gal}(L/F)$.\\
{\it Proof of Claim}: Since $L_0/F_0$ and $F/F_0$ are Galois extensions of relatively prime degrees, the claim follows.
\\
\\
We define $\displaystyle \Phi:= \ell[\sum_{i=0}^{d-1}e^i\sigma_0^{-i}]\in \Z[H]$.  The group ring $\Z[H]$ acts on $F$ in the obvious way, and if we let $H$ act trivially on $L_0$ we get an action on $L$ also. Then $\Phi$ determines a map
\[
\Phi\colon L\to L, x\mapsto \Phi(x).
\]
For convenience, we shall denote $\tilde{x}:=\Phi(x)$.

The claim above implies that $\Phi\sigma=\sigma\Phi$ for every $\sigma\in {\rm Gal}(L/F)$. 
\\
\\
{\bf Claim 2:} We have $\tilde{a}=a$ modulo $(F^\times)^p$; $\tilde{b}=b$ modulo $(F^\times)^p$, $\tilde{c}=c$ modulo $(F^\times)^p$.\\
{\it Proof of Claim:} A similar argument as in the proof of Claim 1 shows that $F(\sqrt[p]{a})/F_0$ is Galois with ${\rm Gal}(F(\sqrt[p]{a})/F_0)={\rm Gal}(F(\sqrt[p]{a})/F)\times {\rm Gal}(F/F_0)$. Since both groups ${\rm Gal}(F(\sqrt[p]{a})/F)$ and ${\rm Gal}(F/F_0)$ are cyclic and of coprime orders, we see that the extension $F(\sqrt[p]{a})/F_0$ is cyclic. By Albert's result (see \cite[pages 209-211]{Alb} and \cite[Section 5]{Wat}), we have $\sigma_0 a =a^e $ modulo $(F^\times)^p$. Hence for all integers $i$, $\sigma_0^i(a)=a^{e^i}\bmod (F^\times)^p$. Thus $\sigma_0^{-i}(a^{e^i})=a\bmod (F^\times)^p$.
Therefore, we have
\[
\tilde{a}=\Phi(a)=\left[ \prod_{i=0}^{d-1}\sigma_0^{-i}(a^{e^i})\right]^\ell= \left[\prod_{i=0}^{d-1} a \right]^\ell= a^{d\ell}=a \bmod (F^\times)^p.
\]
Similarly, we have $\tilde{b}=b$ modulo $(F^\times)^p$, $\tilde{c}=c$ modulo $(F^\times)^p$.
\\
\\
{\bf Claim 3:} For every $x\in L$, we have $\dfrac{\sigma_0\tilde{x}}{\tilde{x}^e}= \sigma_0(x^{\ell(1-e^d)/p})^p \in L^p$.\\
{\it Proof of Claim:} This follows from the following identity in the group ring $\Z[H]$,
\[
(\sigma_0-e)(\sum_{i=0}^{d-1}e^i\sigma_0^{-i})= \sigma_0(1-e^d)\equiv 0\bmod p.
\]
\\
\\
By our construction of Galois $\U_4(\F_p)$-extensions  over fields containing a primitive $p$-th root of unity (see Subsection~\ref{subsec:with p-primitive}), we have $\alpha,\gamma,B,...,A,C,\delta$ such that if we let 
$M:= L(\sqrt[p]{A},\sqrt[p]{C},\sqrt[p]{\delta})$, then $M/F$ is a Galois $\U_4(\F_p)$-extension. 
We set $\tilde{M}:=L(\sqrt[p]{\tilde{A}},\sqrt[p]{\tilde{C}},\sqrt[p]{\tilde{\delta}})$.
\\
\\
{\bf Claim 4:} $\tilde{M}/F$ is Galois with ${\rm Gal}(\tilde{M}/F)\simeq \U_4(\F_p)$.\\
{\it Proof of Claim}: Since $\Phi$ commutes with every $\sigma\in {\rm Gal}(L/F)$, this implies that $\tilde{M}/F$ is Galois. 
This, together with Claim 2, also implies that ${\rm Gal}(\tilde{M}/F)\simeq \U_4(\F_p)$  because the construction of $\tilde{M}$ over $F$ is obtained in the same way as in the construction of $M$, except that we replace the data $\{a,b,c, \alpha,\gamma, B,...\}$ by their "tilde" counterparts $\{\tilde{a},\tilde{b},\tilde{c}, \tilde{\alpha},\tilde{\gamma},\tilde{B},...\}$.
\\
\\
{\bf Claim 5:} $\tilde{M}/F_0$ is Galois with ${\rm Gal}(\tilde{M}/F_0)\simeq {\rm Gal}(\tilde{M}/F)\times {\rm Gal}(F/F_0)$.\\
{\it Proof of Claim}: By Claim 3, we see that $\sigma_0 \tilde{x}=\tilde{x}^e $ modulo $(L^\times)^p$ for every $\tilde{x}$ in the $\F_p$-vector subspace $\tilde{W^*}$ of $L^\times/(L^\times)^p$ generated by $\tilde{A}$, $\tilde{C}$, and $\tilde{\delta}$. Hence $\tilde{W^*}$ is an $\F_p[{\rm Gal}(L/F_0)]$-module.
Therefore $\tilde{M}/F_0$ is Galois by Kummer theory. 

We also have the following exact sequence of groups
\[
1\to {\rm Gal}(\tilde{M}/F)\to {\rm Gal}(\tilde{M}/F_0)\to {\rm Gal}(F/F_0)\to 1.
\]
Since $|{\rm Gal}(\tilde{M}/F)|$ and $|{\rm Gal}(F/F_0)|$ are coprime, the above sequence is split by Schur-Zassenhaus's theorem. (See \cite[IV.7,Theorem 25]{Za}.)
The Galois group ${\rm Gal}(\tilde{M}/F_0)$ is the semidirect product of ${\rm Gal}(\tilde{M}/F)$ and $H={\rm Gal}(F/F_0)$, with $H$ acting on ${\rm Gal}(\tilde{M}/F)$ by conjugation. We need to show that this product is in fact direct, i.e., that the action of $H$ on ${\rm Gal}(\tilde{M}/F)$ is trivial.
Note that $H$ has an order coprime to $p$, and $H$ acts trivially on ${\rm Gal}(L/F)$ (see Claim 1) which is the quotient of ${\rm Gal}(\tilde{M}/F)$ by its Frattini subgroup. 
Then a result of P. Hall (see \cite[Theorem 12.2.2]{Ha})  implies that $H$ acts trivially on ${\rm Gal}(\tilde{M}/F)$.

From the discussion above we obtain  the following result.
\begin{thm}
\label{thm:construction char not p} 
Let the notation be as above. 
Let $M_0$ be the fixed field of $\tilde{M}$ under the subgroup of ${\rm Gal}(\tilde{M}/F_0)$ which is isomorphic to ${\rm Gal}(F/F_0)$. 
Then $M_0/F_0$ is Galois with 
${\rm Gal}(M_0/F_0)\simeq {\rm Gal}(\tilde{M}/F)\simeq \U_4(\F_p)$, and $M_0$ contains $L_0$.
\end{thm}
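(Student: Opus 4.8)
The plan is to deduce the theorem directly from Claim~5 by means of the Galois correspondence, so that essentially all the real work is already behind us. Write $H:=\Gal(F/F_0)$, and recall from Claim~5 that $\Gal(\tilde{M}/F_0)$ is the internal direct product of the normal subgroup $\Gal(\tilde{M}/F)$ with a complementary subgroup $H'$ mapping isomorphically onto $H$ under restriction to $F$. Being a direct factor, $H'$ is normal in $\Gal(\tilde{M}/F_0)$. Moreover $H'$ is exactly the subgroup named in the statement: since $\Gal(\tilde{M}/F)\simeq \U_4(\F_p)$ is a $p$-group and $|H|=d$ is prime to $p$, every element of order prime to $p$ in $\Gal(\tilde{M}/F_0)\simeq \U_4(\F_p)\times H$ lies in $H'$, so $H'$ is the unique subgroup isomorphic to $H$.

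First I would set $M_0:=\tilde{M}^{H'}$, the fixed field named in the statement. As $H'$ is normal, the fundamental theorem of Galois theory gives that $M_0/F_0$ is Galois with
\[
\Gal(M_0/F_0)\simeq \Gal(\tilde{M}/F_0)/H'\simeq \Gal(\tilde{M}/F)\simeq \U_4(\F_p),
\]
the middle isomorphism being projection of the direct product onto its first factor. This settles both the Galois property and the identification of the group in one stroke.

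It then remains to verify $L_0\subseteq M_0$. Here I would invoke coprimality once more: the restriction $\Gal(\tilde{M}/F_0)\twoheadrightarrow \Gal(L_0/F_0)$ sends $H'$ into the $p$-group $\Gal(L_0/F_0)\simeq (\F_p)^3$, and since $|H'|=d$ is prime to $p$ this image is trivial. Hence $H'$ fixes $L_0$ pointwise and $L_0\subseteq \tilde{M}^{H'}=M_0$. (Alternatively, by Claim~1 the factor $H$ was arranged to act trivially on $L_0$.)

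I do not anticipate a genuine obstacle in this concluding step, as Claim~5 already furnishes the decisive direct-product decomposition; the only delicate point is the well-definedness of ``the subgroup isomorphic to $\Gal(F/F_0)$'', which is guaranteed by the $p$-group/$p'$-group splitting above. The substantial difficulty of the construction lies upstream, in producing $\tilde{A},\tilde{C},\tilde{\delta}$ and establishing Claims~4 and~5, not in reading off the theorem from them.
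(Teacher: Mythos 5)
Your proof is correct and follows essentially the same route as the paper: both read the theorem off from Claim~5 via the Galois correspondence (the direct factor $H'$ is normal, so $M_0/F_0$ is Galois with group $\Gal(\tilde{M}/F_0)/H'\simeq \Gal(\tilde{M}/F)\simeq \U_4(\F_p)$), and both get $L_0\subseteq M_0$ from the fact that $H'$, having order prime to $p$, must act trivially on $L_0$. Your two small additions — the uniqueness of the subgroup isomorphic to $\Gal(F/F_0)$ inside the direct product, and the explicit restriction argument to $\Gal(L_0/F_0)$ — are correct refinements of points the paper leaves implicit.
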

\begin{proof} Claim 5 above implies that $M_0/F_0$ is Galois with 
${\rm Gal}(M_0/F_0)\simeq {\rm Gal}(\tilde{M}/F)\simeq \U_4(\F_p)$. Since $H\simeq {\rm Gal}(\tilde{M}/M_0)$ acts trivially on $L_0$, we see that $M_0$ contains $L_0$.

Let $\sigma_1:=\sigma_a|_{M_0}$, $\sigma_2:=\sigma_b|_{M_0}$ and $\sigma_3:=\sigma_c|_{M_0}$. 
  Then $\sigma_1,\sigma_2$ and $\sigma_3$ generate ${\rm Gal}(M_0/F_0)\simeq \U_4(F_p)$. We also have
  \[
  \begin{aligned}
  \chi_1(\sigma_1)=1,   \chi_1(\sigma_2)=0,   \chi_1(\sigma_3)=0;\\
  \chi_2(\sigma_1)=0,   \chi_2(\sigma_2)=1,   \chi_2(\sigma_3)=0;\\
  \chi_3(\sigma_1)=0,   \chi_3(\sigma_2)=0,   \chi_3(\sigma_3)=1.
  \end{aligned}
  \]
  (Note that for each $i=1,2,3$, $\chi_i$ is trivial on ${\rm Gal}(M/M_0)$, hence $\chi_i(\sigma_j)$ makes sense for every $j=1,2,3$.)
  An explicit isomorphism $\varphi\colon {\rm Gal}(M_0/F_0)\to \U_4(\F_p)$ may be defined as
\[
\sigma_1 \mapsto \begin{bmatrix}
1& 1 & 0 & 0\\
0& 1 & 0 & 0\\
0& 0 & 1 & 0\\
0& 0 & 0 & 1
\end{bmatrix}, \; \;
\sigma_2\mapsto  \begin{bmatrix}
1& 0 & 0 & 0\\
0& 1 & 1 & 0\\
0& 0 & 1 & 0\\
0& 0 & 0 & 1
\end{bmatrix}, \;\;
 \sigma_3\mapsto \begin{bmatrix}
1& 0 & 0 & 0\\
0& 1 & 0 & 0\\
0& 0 & 1 & 1\\
0& 0 & 0 & 1
\end{bmatrix}.
\]

\end{proof}
%%%%%%%%%%%%%%%%%%%%%%%%%%%%%%%%%%%%%%%%%%%%%%%%%%%%
%%%%%%%%%%%%%%%%%%%%%%%%%%%%%%%%%%%%%%%%%%%%%%%%%%%%%%%%%
%%%%%%%%%%%%%%%%%%%%%%%%%%%%%%%%%%%%%%%%%%%%%%%%%%%%%%%%%%%%%
\section{The construction of $\U_4(\F_p)$-extensions: The case of characteristic $p$}
In this section we assume that $F$ is of characteristic $p>0$. Although by a theorem of Witt (see \cite{Wi} and \cite[Chapter 9, Section 9.1]{Ko}), we know that the Galois group of the maximal $p$-extension of $F$ is a free pro-$p$- group, finding specific constructions of Galois $p$-extensions over $F$ can still be challenging. The following construction of an explicit Galois extension $M/F$ with Galois group $\U_4(\F_p)$ is an analogue of the construction in Subsection 3.1 when we assumed that a $p$-th root of unity is in $F$. However we find the details interesting, and therefore for the convenience of the reader, we are including them here. Observe that even the case of the explicit construction of Heisenberg extensions of degree $p^3$ in characteristic $p$ is of interest. 
In the case  when $F$ has characteristic not $p$, the constructions of Heisenberg extensions of degree $p^3$ are now classical, important tools in Galois theory. We did not find any such constructions in the literature in the case of characteristic $p$. Nevertheless the construction in Subsection 4.2 seems to be simple, useful and aesthetically pleasing. What is even more surprising is that the field construction  of Galois $\U_4(\F_p)$-extensions over a field $F$ of characteristic $p$ in Subsection 4.3 is almost equally simple. We have to check more details to confirm the validity of this construction, but the construction of the required Galois extension $M$ itself, is remarkably simple.  
The possibility of choosing  generators in such a straightforward  manner (as described in Theorem~\ref{thm:construction char p}) is striking. It is interesting that the main construction in Section 3 carries over with necessary modifications in the case of characteristic $p$.
%%%%%%%%%%%%%%%%%%%%%%
\subsection{A brief review of Artin-Schreier theory} (For more details and the origin of this beautiful theory, see \cite{ASch}.)
Let $F$ be a field of characteristic $p>0$. Let $\wp(X)=X^p-X$ be the Artin-Schreier polynomial. For each $a$ in  $F$ of characteristic $p$, we let $\theta_a$ be a root of $\wp(X)=a$. 
We also denote $[a]_F$ to be the image of $a$ in $F/\wp(F)$. 
For each subgroup $U$ of $F/\wp(F)$,  let $F_U:=F(\theta_u: [u]_F\in U)$. Then the map $U\mapsto F_U$ is a bijection between subgroups of $F/\wp(F)$ and abelian extensions of $F$ of  exponent dividing $p$. There is a pairing 
\[
{\rm Gal}(F_U/F) \times U\to \F_p,
\]
defined by $\langle \sigma, a\rangle = \sigma(\theta_a)-\theta_a$,  
which is independent of the choice of root $\theta_a$. Artin-Schreier theory says that this pairing is non-degenerate. 

Now assume that $F/k$ is a finite Galois extension. The Galois group ${\rm Gal}(F/k)$  acts naturally on $F/\wp(F)$. As an easy exercise, one can show that such an extension $F_U$, where $U$ is a subgroup of $F/\wp(F)$, is Galois over  $k$ if and only if $U$ is actually an $\F_p[{\rm Gal}(F/k)]$-module.

\subsection{Heisenberg extensions in characteristic $p>0$}  
For each $a\in F$, let $\chi_a\in {\rm Hom}(G_F,\F_p)$ be the  corresponding element associated with $a$ via Artin-Schreier theory. Explicitly, $\chi_a$ is defined by 
\[
\chi_a(\sigma)= \sigma(\theta_a)-\theta_a.
\]

Assume that $a,b$ are elements in $F$, which are linearly independent modulo $\wp(F)$. Let $K= F(\theta_a,\theta_b)$. Then $K/F$ is a Galois extension whose Galois group is generated by $\sigma_a$ and $\sigma_b$. Here $\sigma_a(\theta_b)=\theta_b$, $\sigma_a(\theta_a)= \theta_{a}+1$; $\sigma_b(\theta_{a})=\theta_{a}$, $\sigma_b(\theta_{b})= \theta_{b}+1$. 

We set $A=b\theta_a$. Then 
\[
\sigma_a(A)= A+b,\, \text{ and } \sigma_b(A)=A.
\]
\begin{prop}
\label{prop:Heisenberg char p} Let the notation be as above. 
Let $L=K(\theta_A)$. Then $L/F$ is Galois whose Galois group is isomorphic to $\U_3(\F_p)$. 
\end{prop}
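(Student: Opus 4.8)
The plan is to mirror the argument for Proposition~\ref{prop:Heisenberg extension}, systematically replacing $p$-th roots by Artin--Schreier roots. First I would check that $L/F$ is Galois. By the Artin--Schreier Galois criterion recalled above, it suffices to verify that the subgroup $U=\langle [A]_K\rangle$ of $K/\wp(K)$ corresponding to $L=K(\theta_A)$ is stable under ${\rm Gal}(K/F)$. Since $b=\wp(\theta_b)\in\wp(K)$, the displayed relations $\sigma_a(A)=A+b$ and $\sigma_b(A)=A$ give $\sigma_a([A]_K)=[A]_K=\sigma_b([A]_K)$ in $K/\wp(K)$; thus $U$ is even fixed pointwise by ${\rm Gal}(K/F)$, and so $L/F$ is Galois.

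Next I would show that $[L:K]=p$, equivalently $A\notin\wp(K)$. This is the additive analogue of the vanishing of the cup-product obstruction, and it is the step I expect to be the main obstacle (in the multiplicative setting this nondegeneracy was folded into the hypothesis $\chi_a\cup\chi_b=0$). Suppose, for contradiction, that $A=\wp(x)$ with $x\in K$. Applying $\sigma_a$ and $\sigma_b$ and using $\ker\wp=\F_p$, the relations above force $\sigma_a(x)=x+\theta_b+i$ and $\sigma_b(x)=x+j$ for some $i,j\in\F_p$. A short direct computation of the commutator then yields
\[
[\sigma_a,\sigma_b](x)=x-1,
\]
which contradicts the fact that ${\rm Gal}(K/F)$ is abelian, so that $[\sigma_a,\sigma_b]$ acts trivially on $K$. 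Hence $A\notin\wp(K)$, $[L:K]=p$, and therefore $[L:F]=[L:K][K:F]=p^3$.

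Finally I would produce the isomorphism. I would choose extensions $\tilde\sigma_a,\tilde\sigma_b\in{\rm Gal}(L/F)$ of $\sigma_a,\sigma_b$; since $\wp(\tilde\sigma_a(\theta_A))=A+b=\wp(\theta_A+\theta_b)$ and $\wp(\tilde\sigma_b(\theta_A))=A$, one may normalize $\tilde\sigma_a(\theta_A)=\theta_A+\theta_b$ and $\tilde\sigma_b(\theta_A)=\theta_A$. An easy induction gives $\tilde\sigma_a^k(\theta_A)=\theta_A+k\theta_b$, whence $\tilde\sigma_a^p=1$, and similarly $\tilde\sigma_b^p=1$. Setting $\sigma_A:=[\tilde\sigma_a,\tilde\sigma_b]$, the same commutator computation as in the previous paragraph, now carried out on $\theta_A$, gives $\sigma_A(\theta_A)=\theta_A-1$; thus $\sigma_A$ generates ${\rm Gal}(L/K)\simeq\Z/p\Z$ and has order $p$, and one checks directly that it is fixed under conjugation by $\tilde\sigma_a$ and $\tilde\sigma_b$, i.e. $[\tilde\sigma_a,\sigma_A]=[\tilde\sigma_b,\sigma_A]=1$.

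Since $\tilde\sigma_a,\tilde\sigma_b$ surject onto ${\rm Gal}(K/F)\simeq(\Z/p\Z)^2$ and their commutator $\sigma_A$ generates ${\rm Gal}(L/K)$, they generate ${\rm Gal}(L/F)$, a group of order $p^3$ in which the relations $\sigma_a^p=\sigma_b^p=[\sigma_a,\sigma_b]^p=1$ and the centrality of $[\sigma_a,\sigma_b]$ are exactly the defining relations of $\U_3(\F_p)$. Sending $\tilde\sigma_a,\tilde\sigma_b,\sigma_A$ to the elementary unipotent matrices with nonzero entries in positions $(1,2),(2,3),(1,3)$ respectively, as in Proposition~\ref{prop:Heisenberg extension}, then furnishes the desired isomorphism ${\rm Gal}(L/F)\simeq\U_3(\F_p)$.
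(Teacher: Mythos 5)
Your proof is correct and follows essentially the same route as the paper's: lift $\sigma_a,\sigma_b$ to $L$, check each lift has order $p$, compute that their commutator sends $\theta_A\mapsto\theta_A-1$ and is central, and identify the resulting group of order $p^3$ with $\U_3(\F_p)$. The only real difference is cosmetic: you make explicit the verification that $A\notin\wp(K)$ (hence $[L:F]=p^3$) via the commutator contradiction, a point the paper leaves implicit in the very same computation, and you conclude via the standard presentation of $\U_3(\F_p)$ rather than by writing down the matrix isomorphism directly.
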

\begin{proof}
From $\sigma_a(A)- A= b \in \wp(K)$, and $\sigma_b(A)=A$, we see that $\sigma(A)-A \in \wp(K)$ for every $\sigma \in {\rm Gal}(K/F)$. This implies that the extension $L:=K(\theta_{A})/F$ is Galois. Let $\tilde{\sigma}_a\in {\rm Gal}(L/F)$ (resp. $\tilde{\sigma}_b\in {\rm Gal}(L/F)$) be an extension of $\sigma_a$ (resp. $\sigma_b$). Since $\sigma_b(A)=A$, we have $\tilde{\sigma}_b(\theta_A)=\theta_{A}+j$, for some $j\in \F_p$. Hence $\tilde{\sigma}_b^p(\theta_A)=\theta_{A}$. This implies that $\tilde{\sigma}_b$ is of order $p$.

On the other hand, we have
\[
\wp(\tilde{\sigma}_a(\theta_{A}))=\sigma_a(A) =A +b.
\]
Hence $\tilde{\sigma}_a(\theta_{A})= \theta_{A}+\theta_{b}+i$, for some $i\in \F_p$. Then
\[
\tilde{\sigma}_a^p(\theta_{A})= \theta_{A} +p\theta_b+pi=\theta_{A}.
\]
This implies that $\tilde{\sigma}_a$ is also of order $p$. We have
\[
\begin{aligned}
\tilde{\sigma}_a\tilde{\sigma}_b(\theta_{A}) &= \tilde{\sigma}_a(j+\theta_{A})=i+j+\theta_{A}+\theta_{b},\\
\tilde{\sigma}_b\tilde{\sigma}_a(\theta_{A}) &=\tilde{\sigma}_b(i+ \theta_{A}+\theta_{b})={i+j}+\theta_{A}+1+\theta_{b}.
\end{aligned}
\]
We set $ \tilde{\sigma}_{A}:= \tilde{\sigma}_a \tilde{\sigma}_b\tilde{\sigma}_a^{-1}\tilde{\sigma}_b^{-1}$. Then
\[
\tilde{\sigma}_{A}(\theta_{A})=\theta_{A}-1.
\]
This implies that $\tilde{\sigma}_{A}$ is of order $p$ and that ${\rm Gal}(L/F)$ is generated by $\tilde{\sigma}_a$ and $\tilde{\sigma}_b$. We also have
\[
\begin{aligned}
\tilde{\sigma}_a \tilde{\sigma}_{A}= \tilde{\sigma}_{A}\tilde{\sigma}_a, \;\text{ and } \tilde{\sigma}_b \tilde{\sigma}_{A}= \tilde{\sigma}_{A}\tilde{\sigma}_b.
\end{aligned}
\]
We can define an isomorphism $\varphi \colon {\rm Gal}(L/F)\to \U_3(\Z/p\Z)$ by letting
\[
\tilde{\sigma}_a \mapsto \begin{bmatrix}
1& 1 & 0 \\
0& 1 & 0 \\
0& 0 & 1
\end{bmatrix},
\tilde{\sigma}_b\mapsto 
\begin{bmatrix}
1& 0 & 0 \\
0& 1 & 1 \\
0& 0 & 1 
\end{bmatrix},
\tilde{\sigma}_{A}\mapsto
\begin{bmatrix}
1& 0 & 1 \\
0& 1 & 0 \\
0& 0 & 1 
\end{bmatrix}.
\]

Note that $[L:F]=p^3$. Hence there are exactly $p$ extensions of $\sigma_a\in {\rm Gal}(K/F)$ to the automorphisms in ${\rm Gal}(L/F)$ since $[L:K]=p^3/p^2=p$. Therefore for later use,  we can choose an extension of $\sigma_a\in {\rm Gal}(K/F)$, which we shall denote $\sigma_a\in {\rm Gal}(L/F)$ with a slight abuse of notation,  in such a way that $\sigma_a(\theta_{A})= \theta_{A}+ \theta_{b}$. 
\end{proof}
\begin{rmk} It is interesting to compare our choices of generators $A$ of Heisenberg extensions over given bicyclic extensions in the case of characteristic $p$ and the case when the base field in fact contains a primitive $p$-th root of unity.
 See the proofs of Proposition~\ref{prop:Heisenberg extension} and the proposition above. Although the form of $A$ in Proposition~\ref{prop:Heisenberg extension} is more complicated than the strikingly simple choice above, the basic principle for the search of a suitable $A$ is the same in both cases. We need to guarantee that $\sigma_a(A)/A\in (K^\times)^p$ and $\sigma_a(A)-A\in \wp(K)$  in order that $K(\sqrt[p]{A})$ and $K(\theta_A)$ are Galois over $F$.
 Further, in order to guarantee that $\sigma_a$ and $\sigma_b$ will not commute, we want  $\sigma_a(A)/A=bk^p$, for some $k\in K^\times$, where $\sigma_b$ acts trivially on $k$.
Similarly in the characteristic $p$ case we want $\sigma_a(A)-A=b+\wp(k)$, where $\sigma_b(k)=k$. If ${\rm char}(F)=p$, then this is always possible in a  simple way above with $k$ even being $0$. (See also \cite[Appendix A.1, Example]{JLY}, where the authors discuss a construction of quaternion $Q_8$-extensions over fields of characteristic $2$.)
In the case when $F$ contains a primitive $p$-th root of unity, the search for a suitable $A$ leads to the norm condition $N_{F(\sqrt[p]{a})/F}(\alpha)=b$. For some related considerations see \cite[Section 2]{MS1}.
\end{rmk}
%%%%%%%%%%%%%%%%%%%%%%%%%%%%%%
%%%%%%%%%%%%%%%%%%%%%%%%%%%%%
\subsection{Construction of Galois $\U_4(\F_p)$-extensions}
We assume that we are given  elements $a$, $b$ and $c$ in $F$ such that 
 $a$, $b$ and $c$ are linearly independent modulo $\wp(F)$. We shall construct a Galois $\U_4(\F_p)$-extension $M/F$ such that $M$ contains $F(\theta_{a},\theta_{b},\theta_{c})$.

First we note that  $F(\theta_{a},\theta_{b},\theta_{c})/F$ is a Galois extension with ${\rm Gal}(F(\theta_{a},\theta_{b},\theta_{c})/F)$ generated by $\sigma_a,\sigma_b,\sigma_c$. Here
\[
\begin{aligned}
\sigma_a(\theta_{a})&=1+ \theta_{a}, \sigma_a(\theta_{b})=\theta_{b}, \sigma_a(\theta_{c})=\theta_{c};\\
\sigma_b(\theta_{a})&=\theta_{a}, \sigma_b(\theta_{b})=1+ \theta_{b}, \sigma_b(\theta_{c})= \theta_{c};\\
\sigma_c(\theta_{a})&=\theta_{a}, \sigma_c(\theta_{b})=\theta_{b}, \sigma_c(\theta_{c})=1+ \theta_{c}.
\end{aligned}
\]
Recall that $A=b\theta_a$. We set $C:= b\theta_c$. 
We set $\delta:=(AC)/b=b\theta_a\theta_c \in E:=F(\theta_a,\theta_c)$. Then we have
\[
\begin{aligned}
\sigma_a(\delta)-\delta =b\sigma_a(\theta_a)\sigma_a(\theta_c)-b\theta_a\theta_c= b[\sigma_a(\theta_a)-\theta_a]\theta_c=b\theta_c=C,\\
\sigma_c(\delta)-\delta =b\sigma_c(\theta_a)\sigma_c(\theta_c)-b\theta_a\theta_c= b\theta_a[\sigma_c(\theta_c)-\theta_c]=b\theta_a=A.
\end{aligned}
\]
Finally set $G :={\rm Gal}(E/F).$

\begin{thm} 
\label{thm:construction char p}
 Let $M:= E(\theta_{\delta},\theta_{A},\theta_{C},\theta_{b})$. Then $M/F$ is a Galois extension, $M$ contains $F(\theta_{a},\theta_{b},\theta_{c})$, and ${\rm Gal}(M/F)\simeq \U_4(\F_p)$.
\end{thm}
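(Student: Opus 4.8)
The plan is to run the proof of Theorem~\ref{thm:construction} in parallel, replacing Kummer theory by the Artin--Schreier theory of Subsection~4.1 and every multiplicative relation by its additive counterpart. First I would consider the $\F_p$-subspace $W^*$ of $E/\wp(E)$ spanned by $[b]_E,[A]_E,[C]_E,[\delta]_E$, and record the structural relations
\[
\sigma_a(A)-A=b,\quad \sigma_c(C)-C=b,\quad \sigma_a(\delta)-\delta=C,\quad \sigma_c(\delta)-\delta=A,
\]
together with $\sigma_c(A)=A$, $\sigma_a(C)=C$, and the fact that $\sigma_b$ fixes $A,C,\delta$. These show that $W^*$ is stable under $G=\Gal(E/F)$, hence is an $\F_p[G]$-module, so that $M=E(\theta_\delta,\theta_A,\theta_C,\theta_b)$ is Galois over $F$ by the criterion recalled at the end of Subsection~4.1.

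Next I would compute $\dim_{\F_p}W^*=4$, exactly as in the characteristic $\neq p$ case but additively. Since $a,b,c$ are independent modulo $\wp(F)$ one has $[b]_E\neq 0$; applying $\sigma_a-1$ to $[A]_E$ and $\sigma_c-1$ to $[C]_E$ both yield $[b]_E$, which forces $[b]_E,[A]_E,[C]_E$ to be independent; finally $(\sigma_a-1)(\sigma_c-1)$ annihilates $\langle[b]_E,[A]_E,[C]_E\rangle$ while $(\sigma_a-1)(\sigma_c-1)[\delta]_E=(\sigma_a-1)[A]_E=[b]_E\neq 0$, so $[\delta]_E$ is independent of the other three. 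This gives $[M:E]=p^4$, hence $[M:F]=p^6$, and $W=\Gal(M/E)\cong(\Z/p\Z)^4$.

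The construction of the generators would then mirror the earlier argument. Using Proposition~\ref{prop:Heisenberg char p} I form the Heisenberg extensions $H^{a,b}=F(\theta_a,\theta_A,\theta_b)$ and $H^{b,c}=F(\theta_c,\theta_C,\theta_b)$, exhibit their compositum $N$ as a degree-$p^5$ fiber product over $F(\theta_b)$, and use the pull-back to choose compatible lifts: $\sigma_a\in\Gal(M/F)$ with $\sigma_a(\theta_A)=\theta_A+\theta_b$ and $\sigma_a|_{H^{b,c}}=1$, symmetrically $\sigma_c$ with $\sigma_c(\theta_C)=\theta_C+\theta_b$ and $\sigma_c|_{H^{a,b}}=1$, and $\sigma_b$ the element of $\Gal(M/E)$ dual to $[b]_E$, acting by $\theta_b\mapsto\theta_b+1$ and trivially on $\theta_A,\theta_C,\theta_\delta$. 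A Burnside-basis argument, after identifying the maximal elementary abelian subextension as $K=F(\theta_a,\theta_b,\theta_c)$, then shows that $\sigma_a,\sigma_b,\sigma_c$ generate $\Gal(M/F)$.

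It remains to verify that $\sigma_a,\sigma_b,\sigma_c$ satisfy the presentation of $\U_4(\F_p)$ from \cite{BD} used in the proof of Theorem~\ref{thm:construction}. The orders follow from the additive induction $\sigma_a(\theta_\delta)=\theta_\delta+\theta_C+i$ for some $i\in\F_p$, which gives $\sigma_a^p(\theta_\delta)=\theta_\delta$, and symmetrically for $\sigma_c$, while $\sigma_b$ visibly has order $p$. The commutation $[\sigma_a,\sigma_c]=1$ and the relations $[\sigma_a,[\sigma_a,\sigma_b]]=[\sigma_b,[\sigma_a,\sigma_b]]=1$, $[\sigma_b,[\sigma_b,\sigma_c]]=[\sigma_c,[\sigma_b,\sigma_c]]=1$, $[[\sigma_a,\sigma_b],[\sigma_b,\sigma_c]]=1$ reduce, as before, to the single check of the action on $\theta_\delta$, since everything else already holds inside the Heisenberg layers $H^{a,b}$ and $H^{b,c}$, which have nilpotency class $2$. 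I expect the main obstacle to be precisely this bookkeeping: making the pull-back choice of $\sigma_a,\sigma_c$ consistent and then tracking the Artin--Schreier constants $i,j\in\F_p$ through the comparison of $\sigma_a\sigma_c(\theta_\delta)$ with $\sigma_c\sigma_a(\theta_\delta)$, where $[\sigma_a,\sigma_c]=1$ must emerge. The additive simplification $\delta=b\theta_a\theta_c$ should make these constants cancel cleanly, so that the whole verification is genuinely simpler than its characteristic $\neq p$ counterpart.
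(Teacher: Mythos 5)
Your proposal is correct and follows essentially the same route as the paper: the paper's proof of Theorem~\ref{thm:construction char p} likewise forms the $\F_p[G]$-module $W^*\subseteq E/\wp(E)$, computes $\dim_{\F_p}W^*=4$, builds the fiber product $N=H^{a,b}H^{b,c}$ over $F(\theta_b)$ to choose the compatible lifts $\sigma_a$, $\sigma_b$, $\sigma_c$, and verifies the Biss--Dasgupta relations~\eqref{eq:R}, with the paper explicitly omitting those final verifications as parallel to Theorem~\ref{thm:construction}. The only difference is that you spell out some of the additive bookkeeping (e.g.\ $\sigma_a(\theta_\delta)=\theta_\delta+\theta_C+i$ and the order-$p$ induction) that the paper leaves to the reader.
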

\begin{proof}
Let $W^*$ be the $\F_p$-vector space in $E/\wp(E)$ generated by $[b]_E,[A]_E,[C]_E$ and $[\delta]_E$. Since 
\[
\begin{aligned}
\sigma_c(\delta)&= \delta +A,\\
\sigma_a(\delta)&=\delta+ C,\\
\sigma_a(A)&=A+b,\\
\sigma_c(C)&=C+b,
\end{aligned}
\]
we see that $W^*$ is in fact an $\F_p[G]$-module. Hence $M/F$ is a Galois extension by Artin-Schreier  theory.
\\
\\
{\bf Claim:} $\dim_{\F_p}(W^*)=4$. Hence $[L:F]=[L:E][E:F]=p^4p^2=p^6.$\\
{\it Proof of Claim:} From our hypothesis that $\dim_{\F_p}\langle [a]_F,[b]_F,[c]_F\rangle=3$, we see that $\langle [b]_E\rangle \simeq \F_p$. 

Clearly, $\langle[b]_E\rangle \subseteq (W^*)^G$.
From the relation 
\[
[\sigma_a(A)]_E= [A]_E+ [b]_E,
\]
we see that $[A]_E$  is not in $(W^*)^G$. Hence $\dim_{\F_p}\langle [b]_E,[A]_E\rangle=2$.

From the relation 
\[  [\sigma_c(C)]_E= [C]_E+ [b]_E,
\]
we see that $[C]_E$ is not in $(W^*)^{\sigma_c}$. But we have $\langle [b]_E,[A]_E\rangle\subseteq (W^*)^{\sigma_c}$. Hence
\[
\dim_{\F_p}\langle [b]_E,[A]_E,[C]_E\rangle =3.
\]

Observe that the element $(\sigma_a-1)(\sigma_c-1)$ annihilates the $\F_p[G]$-module $\langle [b]_E,[A]_E,[C]_E\rangle$, while 
\[
(\sigma_a-1)(\sigma_c-1)[\delta]_E= \sigma_a([A]_E)-[A]_E= [b]_E,
\]
we see that
\[
\dim_{\F_p}W^*=\dim_{\F_p}\langle [b]_E,[A]_E,[C]_E,[\delta]_E\rangle =4.
\]
\\
\\
Let $H^{a,b}=F(\theta_{a},\theta_{A},\theta_{b})$ and $H^{b,c}=F(\theta_{c},\theta_{C},\theta_{b})$. 
Let 
\[N:=H^{a,b}H^{b,c}=F(\theta_a,\theta_{c},\theta_{b},\theta_{A},\theta_{C})=E(\theta_{b},\theta_{A},\theta_{C}).\] 
Then $N/F$ is a Galois extension of degree $p^5$. 
This is because  ${\rm Gal}(N/E)$ is dual to the $\F_p[G]$-submodule $\langle  [b]_E,[A]_E,[C]_E \rangle$ via Artin-Schreier theory, 
and the proof of the claim above shows that $\dim_{\F_p}\langle [b]_E,[A]_E,[C]_E\rangle =3$. We have the following commutative diagram
\[
\xymatrix{
{\rm Gal}(N/F) \ar@{->>}[r] \ar@{->>}[d] & {\rm Gal}(H^{a,b}/F) \ar@{->>}[d]\\
{\rm Gal}(H^{b,c}/F) \ar@{->>}[r] & {\rm Gal}(F(\theta_{b})/F).
}
\]
So we have a homomorphism $\eta$ from ${\rm Gal}(N/F)$ to the pull-back ${\rm Gal}(H^{b,c}/F)\times_{{\rm Gal}(F(\theta_{b})/F)} {\rm Gal}(H^{a,b}/F)$:
\[
\eta\colon {\rm Gal}(N/F) \longrightarrow {\rm Gal}(H^{b,c}/F)\times_{{\rm Gal}(F(\theta_{b})/F)} {\rm Gal}(H^{a,b}/F),
\]
which make the obvious diagram commute.
We claim that $\eta$ is injective. Indeed, let $\sigma$ be an element in $\ker\eta$. Then $\sigma\mid_{H^{a,b}}=1$ in ${\rm Gal}(H^{a,b}/F)$, and $\sigma\mid_{H^{b,c}}=1$ in ${\rm Gal}(H^{b,c}/F)$. Since $N$ is the compositum of $H^{a,b}$ and $H^{b,c}$, this implies that $\sigma=1$, as desired. 

Since $|{\rm Gal}(H^{b,c}/F)\times_{{\rm Gal}(F(\theta_{b})/F)} {\rm Gal}(H^{a,b}/F)|=p^5=|{\rm Gal}(N/F)|$, we see that $\eta$ is actually an isomorphism.
As in the proof of Proposition~\ref{prop:Heisenberg char p}, we can choose an extension $\sigma_a\in {\rm Gal}(H^{a,b}/F)$ of $\sigma_a\in {\rm Gal}(F(\theta_{a},\theta_{b})/F)$ in such a way that 
\[
\sigma_a(\theta_{A})= \theta_{A}+\theta_{b}.
\]
Since the square commutative diagram above is a pull-back, we can choose an extension  $\sigma_a\in {\rm Gal}(N/F)$ of $\sigma_a\in {\rm Gal}(H^{a,b}/F)$ in such a way that
\[
 \sigma_a\mid_{H^{b,c}}=1.
\]
Now we can choose any extension $\sigma_a\in {\rm Gal}(M/F)$ of $\sigma_a\in{\rm Gal}(N/F)$. Then we have
\[
\sigma_a(\theta_{A})= \theta_{A}+\theta_{b} \; \text{ and }  \sigma_a\mid_{H^{b,c}}=1.
\]

Similarly, we can choose an extension  $\sigma_c\in {\rm Gal}(M/F)$ of $\sigma_c\in {\rm Gal}(F(\theta_b,\theta_c)/F)$ in such a way that
\[
\sigma_c(\theta_{C})= \theta_{C}+\theta_{b}, \text{ and } \sigma_c\mid_{H^{a,b}}=1.
\]

We define $\sigma_b\in {\rm Gal}(M/E)$ to be the element which is dual to $[b]_E$ via Artin-Schreier theory. In other words, we require that
\[
\sigma_b(\theta_{b})=1+ \theta_{b},
\]
and $\sigma_b$ acts trivially on $\theta_{A}$, $\theta_{C}$ and $\theta_{\delta}$. We consider $\sigma_b$ as an element in ${\rm Gal}(M/F)$, then it is clear that $\sigma_b$ is an extension of $\sigma_b\in {\rm Gal}(F(\theta_{a},\theta_{b},\theta_{c})/F)$.
Let $W={\rm Gal}(M/E)$, and let $H={\rm Gal}(M/F)$, then we have the following exact sequence
\[
1\to W\to H\to G\to 1.
\]
By Artin-Schreier theory, it follows that $W$ is dual to $W^*$, and hence $W\simeq (\Z/p\Z)^4$. In particular, we have $|H|=p^6$.

Recall that from \cite[Theorem 1]{BD}, we know that the group $\U_4(\F_p)$ has a presentation with generators $s_1,s_2,s_3$ subject to the  relations \eqref{eq:R} displayed in the proof of Theorem~\ref{thm:construction}.
Note that  $|\Gal(M/F)|=p^6$.
So in order to show that $\Gal(M/F)\simeq \U_4(\F_p)$, we shall show that $\sigma_a,\sigma_b$ and $\sigma_c$ generate $\Gal(M/F)$ and they satisfy these relations~\eqref{eq:R}.
 The proofs of the following claims are similar to the proofs of analogous claims in the proof of Theorem~\ref{thm:construction}. Therefore we shall omit them.
\\
\\
{\bf Claim:} The elements $\sigma_a$, $\sigma_b$ and $\sigma_c$ generate $\Gal(M/F)$.\\
{\bf Claim:} The order of $\sigma_a$ is $p$.\\
{\bf Claim:} The order of $\sigma_b$ is $p$.\\
{\bf Claim:} The order of $\sigma_c$ is $p$.\\
{\bf Claim:} $[\sigma_a,\sigma_c]=1$.\\
{\bf Claim:} $[\sigma_a,[\sigma_a,\sigma_b]]=[\sigma_b,[\sigma_a,\sigma_b]]=1$.\\
{\bf Claim:} $[\sigma_b,[\sigma_b,\sigma_c]]=[\sigma_c,[\sigma_b,\sigma_c]]=1$.\\
{\bf Claim:} $[[\sigma_a,\sigma_b],[\sigma_b,\sigma_c]]=1$.\\
\\
  An explicit isomorphism $\varphi\colon {\rm Gal}(M/F)\to \U_4(\F_p)$ may be defined as
\[
\sigma_a \mapsto \begin{bmatrix}
1& 1 & 0 & 0\\
0& 1 & 0 & 0\\
0& 0 & 1 & 0\\
0& 0 & 0 & 1
\end{bmatrix}, \; \;
\sigma_b\mapsto  \begin{bmatrix}
1& 0 & 0 & 0\\
0& 1 & 1 & 0\\
0& 0 & 1 & 0\\
0& 0 & 0 & 1
\end{bmatrix}, \;\;
 \sigma_c\mapsto \begin{bmatrix}
1& 0 & 0 & 0\\
0& 1 & 0 & 0\\
0& 0 & 1 & 1\\
0& 0 & 0 & 1
\end{bmatrix}.
\]

\end{proof}

\section{Triple Massey products}
Let $G$ be a profinite group and $p$ a prime number. We consider the finite field $\F_p$ as  a trivial discrete $G$-module. Let $\sC^\bullet=(C^\bullet(G,\F_p),\partial,\cup)$ be the differential graded algebra of inhomogeneous continuous cochains of $G$ with coefficients in $\F_p$ (see \cite[Ch.\ I, \S2]{NSW} and \cite[Section 3]{MT1}). For each $i=0,1,2,\ldots$, we write $H^i(G,\F_p)$ for the corresponding cohomology group. We denote by $Z^1(G,\F_p)$ the subgroup of $C^1(G,\F_p)$ consisting of all 1-cocycles. Because we use  trivial action on the coefficients $\F_p$, we have $Z^1(G,\F_p)=H^1(G,\F_p)={\rm Hom}(G,\F_p)$. Let $x,y,z$ be elements in $H^1(G,\F_p)$. Assume that 
\[
x\cup y=y\cup z=0\in H^2(G,\F_p).
\]
In this case we say that the triple Massey product $\langle x,y,z\rangle$ is defined. Then there exist cochains $a_{12}$ and $a_{23}$ in $C^1(G,\F_p)$ such that
\[
\partial a_{12}=x\cup y \; \text{ and } \partial a_{23}= y\cup z,
\]
in $C^2(G,\F_p)$. Then we say that $D:=\{x,y,z,a_{12},a_{23}\}$  is a {\it defining system} for the triple Massey product $\langle x, y, z\rangle$. Observe that 
\[
\begin{aligned}
\partial (x\cup a_{23}+ a_{12}\cup z) & = \partial x\cup a_{23}-x\cup \partial a_{23}+\partial a_{12}\cup z -a_{12}\cup \partial z\\
&=0 \cup a_{23}-x\cup (y\cup z) +(x\cup y)\cup z - a_{12}\cup 0\\
&=0 \in C^2(G,\F_p).
\end{aligned}
\]
Therefore $x\cup a_{23}+a_{12}\cup z$ is a 2-cocycle. We define the value $\langle x, y, z\rangle_D$ of the triple Massey product $\langle x, y, z\rangle$ with respect to the defining system $D$ to be the cohomology class $[x \cup a_{23}+ a_{12}\cup z]$ in $H^2(G,\F_p)$. The set of all values $\langle x, y, z\rangle_D$ when $D$ runs over the set of all defining systems, is called the triple Massey product $\langle x,y ,z \rangle \subseteq H^2(G,\F_p)$. Note that we always have
\[
\langle x,y ,z \rangle  = \langle x, y, z\rangle_D + x\cup H^1(G,\F_p) + H^1(G,\F_p)\cup z.
\]
We also have the following result.
 \begin{lem}
\label{lem:additivity} 
If the triple Massey products $\langle x,y,z\rangle $  and $\langle x,y^\prime,z\rangle$ are defined, then the triple Massey product $\langle x,y+y^\prime,z\rangle$ is defined, and
\[
\langle x,y+y^\prime,z\rangle=\langle x,y,z\rangle +\langle x,y^\prime,z\rangle.
\]
\end{lem}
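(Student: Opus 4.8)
The plan is to verify the claim directly from the cochain-level definitions, using the bilinearity of the cup product together with the fact that the indeterminacy of a triple Massey product depends only on its outer entries.

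First I would check that $\langle x, y+y', z\rangle$ is defined. Since both $\langle x,y,z\rangle$ and $\langle x,y',z\rangle$ are defined, we have $x\cup y = y\cup z = 0$ and $x\cup y' = y'\cup z = 0$ in $H^2(G,\F_p)$. By bilinearity of the cup product, $x\cup(y+y') = x\cup y + x\cup y' = 0$ and $(y+y')\cup z = y\cup z + y'\cup z = 0$, so the product $\langle x, y+y', z\rangle$ is indeed defined.

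Next I would build a defining system for $\langle x, y+y', z\rangle$ by summing defining systems for the two given products. Fix $D = \{x,y,z,a_{12},a_{23}\}$ and $D' = \{x,y',z,a_{12}',a_{23}'\}$, so that $\partial a_{12} = x\cup y$, $\partial a_{23} = y\cup z$, $\partial a_{12}' = x\cup y'$ and $\partial a_{23}' = y'\cup z$. Putting $b_{12} := a_{12}+a_{12}'$ and $b_{23} := a_{23}+a_{23}'$, linearity of $\partial$ gives $\partial b_{12} = x\cup(y+y')$ and $\partial b_{23} = (y+y')\cup z$, so $D'' := \{x, y+y', z, b_{12}, b_{23}\}$ is a defining system for $\langle x, y+y', z\rangle$. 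Bilinearity of the cup product then yields, at the level of cochains,
\[
x\cup b_{23} + b_{12}\cup z = (x\cup a_{23} + a_{12}\cup z) + (x\cup a_{23}' + a_{12}'\cup z),
\]
and passing to cohomology classes gives $\langle x,y+y',z\rangle_{D''} = \langle x,y,z\rangle_D + \langle x,y',z\rangle_{D'}$.

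Finally I would pass from values to the full Massey products regarded as cosets. The indeterminacy formula recalled just above the lemma shows that $\langle x,y,z\rangle$, $\langle x,y',z\rangle$ and $\langle x,y+y',z\rangle$ are all cosets of the single subgroup $I := x\cup H^1(G,\F_p) + H^1(G,\F_p)\cup z$, since $I$ depends only on the outer entries $x$ and $z$ and not on the middle one. As $I$ is a subgroup, the setwise sum of two of its cosets is again a single coset, whence
\[
\langle x,y,z\rangle + \langle x,y',z\rangle = \langle x,y,z\rangle_D + \langle x,y',z\rangle_{D'} + I = \langle x,y+y',z\rangle_{D''} + I = \langle x,y+y',z\rangle,
\]
as desired. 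I do not expect a serious obstacle here; the only step needing a little care is this last coset bookkeeping, where one must confirm that the equality of the chosen values promotes to an equality of sets — which works precisely because the indeterminacy subgroup $I$ is common to all three products.
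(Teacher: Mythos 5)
Your proof is correct and follows essentially the same route as the paper's: sum the two defining systems to get a defining system for $\langle x,y+y',z\rangle$, use bilinearity of the cup product at the cochain level to identify the values, and absorb the common indeterminacy subgroup $x\cup H^1(G,\F_p)+H^1(G,\F_p)\cup z$ to pass from values to the full cosets. The only difference is that you spell out the definedness check and the coset bookkeeping, which the paper leaves implicit.
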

\begin{proof} Let $\{x,y,z, a_{12}, a_{23}\}$ (respectively $\{x,y^\prime,z, a^\prime_{12}, a^\prime_{23}\}$) be a defining system for $\langle x,y,z\rangle$ (respectively $\langle x, y^\prime,z\rangle$). Then$ \{x,y+y^\prime,z, a_{12}+a^\prime_{12}, a_{23}+a^\prime_{23}\}$ is a defining system for $\langle x, y+y^\prime,z\rangle$. We also have
\[
\begin{aligned}
\langle x,y,z\rangle+ \langle x, y^\prime,z\rangle = &[x\cup a_{23}+a_{12}\cup z] +x\cup H^1(G,\F_p)+ H^1(G,\F_p) \cup z \\
&+ [x\cup a^\prime_{23}+a^\prime_{12}\cup z] +x\cup H^1(G,\F_p)+ H^1(G,\F_p) \cup z \\
= & [x\cup (a_{23}+a^\prime_{23})+ (a_{12}+a^\prime_{12})\cup z] + x\cup H^1(G,\F_p)+  H^1(G,\F_p)\cup z\\
=&\langle x,y+y^\prime,z\rangle,
\end{aligned}
\]
as desired.
\end{proof}
 
 The following lemma  is a special case of  a well-known fact (see \cite[Lemma 6.2.4 (ii)]{Fe}) but for the sake of convenience we provide its proof. 
 \begin{lem}
 \label{lem:scalar Massey}
 If the triple Massey product $\langle x,y,z\rangle $is defined, then for any $\lambda\in \F_p$ the triple Massey product $\langle x,\lambda y,z\rangle$ is defined, and
\[
\langle x,\lambda y,z\rangle\supseteq \lambda\langle x,y,z\rangle.
\]
  \end{lem}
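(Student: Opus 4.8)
The plan is to produce, from each defining system for $\langle x,y,z\rangle$, a scaled defining system for $\langle x,\lambda y,z\rangle$ whose associated value is exactly $\lambda$ times the original one; the asserted inclusion then drops out immediately. First I would check that $\langle x,\lambda y,z\rangle$ is defined at all. Since the cup product is $\F_p$-bilinear, the hypothesis $x\cup y=y\cup z=0$ gives $x\cup(\lambda y)=\lambda(x\cup y)=0$ and $(\lambda y)\cup z=\lambda(y\cup z)=0$ in $H^2(G,\F_p)$, so the triple Massey product $\langle x,\lambda y,z\rangle$ is indeed defined.

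Next, given an arbitrary defining system $D=\{x,y,z,a_{12},a_{23}\}$ for $\langle x,y,z\rangle$, I would set $D_\lambda:=\{x,\lambda y,z,\lambda a_{12},\lambda a_{23}\}$ and verify it is a defining system for $\langle x,\lambda y,z\rangle$. Using that the coboundary $\partial$ is $\F_p$-linear, one gets $\partial(\lambda a_{12})=\lambda\,\partial a_{12}=\lambda(x\cup y)=x\cup(\lambda y)$, and likewise $\partial(\lambda a_{23})=(\lambda y)\cup z$, which are exactly the two conditions required of a defining system. Computing the value of this scaled system, bilinearity again yields
\[
\langle x,\lambda y,z\rangle_{D_\lambda}=[\,x\cup(\lambda a_{23})+(\lambda a_{12})\cup z\,]=\lambda\,[\,x\cup a_{23}+a_{12}\cup z\,]=\lambda\,\langle x,y,z\rangle_D .
\]

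To conclude, I would let $D$ range over all defining systems for $\langle x,y,z\rangle$: the classes $\lambda\,\langle x,y,z\rangle_D$ then exhaust the set $\lambda\,\langle x,y,z\rangle$, and each of them equals $\langle x,\lambda y,z\rangle_{D_\lambda}$, hence lies in $\langle x,\lambda y,z\rangle$. This gives $\lambda\,\langle x,y,z\rangle\subseteq\langle x,\lambda y,z\rangle$, as claimed.

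There is essentially no hard step here; the construction $D\mapsto D_\lambda$ is routine once bilinearity of $\cup$ and linearity of $\partial$ are invoked. The only point deserving care—and the reason the statement asserts only $\supseteq$ rather than equality—is the uniform treatment of $\lambda=0$. The assignment $D\mapsto D_\lambda$ need not be surjective onto the defining systems of $\langle x,\lambda y,z\rangle$; for $\lambda\neq 0$ one could run the same argument with $\lambda^{-1}$ to obtain the reverse inclusion and hence equality, but for $\lambda=0$ the left-hand side $\lambda\,\langle x,y,z\rangle$ collapses to $\{0\}$ whereas $\langle x,0,z\rangle$ is the full indeterminacy coset $x\cup H^1(G,\F_p)+H^1(G,\F_p)\cup z$, so only the inclusion survives in general.
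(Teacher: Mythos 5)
Your proof is correct and is essentially the same as the paper's: both scale the defining system $\{x,y,z,a_{12},a_{23}\}$ to $\{x,\lambda y,z,\lambda a_{12},\lambda a_{23}\}$ and use bilinearity of $\cup$ and linearity of $\partial$ to identify the resulting value with $\lambda\langle x,y,z\rangle_D$. Your added remarks on definedness and on why only the inclusion (not equality) holds when $\lambda=0$ are accurate, just more explicit than the paper's terse treatment.
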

  \begin{proof}
  Let $D=\{x,y,z, a_{12}, a_{23}\}$  be any defining system for $\langle x,y,z\rangle$. Clearly $D^\prime:=\{x,\lambda y,z, \lambda a_{12}, \lambda a_{23}\}$ is a defining system for $\langle x,\lambda y,z\rangle$, and 
  \[\lambda \langle x,y,z\rangle_D=\lambda[x\cup a_{23}+a_{12}\cup z]=[x\cup(\lambda a_{23})+(\lambda a_{12})\cup z]= \langle x,\lambda y,z\rangle_{D^\prime}.
  \]
  Therefore $\lambda\langle x,y,z\rangle\subseteq \langle x,\lambda y,z\rangle.$
  \end{proof}

A direct consequence of Theorems~\ref{thm:construction},~\ref{thm:construction char not p} and~\ref{thm:construction char p}, is the following result which roughly says that every "non-degenerate" triple Massey product vanishes whenever it is defined.
\begin{prop}
\label{prop:nondegenerate Massey}
Let $F$ be an arbitrary field. Let $\chi_1,\chi_2,\chi_3$ be elements in ${\rm Hom}(G_F,\F_p)$. We assume that $\chi_1,\chi_2,\chi_3$ are $\F_p$-linearly independent. If the triple Massey product $\langle \chi_1,\chi_2,\chi_3\rangle$ is defined, then it contains 0.
\end{prop}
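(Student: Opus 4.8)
The plan is to reduce Proposition~\ref{prop:nondegenerate Massey} to the explicit $\U_4(\F_p)$-constructions of Theorems~\ref{thm:construction}, \ref{thm:construction char not p} and~\ref{thm:construction char p} via the standard dictionary between triple Massey products and lifts of representations into $\U_4(\F_p)$. Recall that for $x,y,z\in H^1(G_F,\F_p)$ with $x\cup y=y\cup z=0$, a defining system $\{x,y,z,a_{12},a_{23}\}$ is precisely the data needed to build a map $G_F\to \U_4(\F_p)$ on the $(1,2),(2,3),(3,4)$ and $(1,3),(2,4)$ entries; the obstruction to completing it to a genuine homomorphism (i.e.\ filling in the $(1,4)$ entry) is exactly the $2$-cocycle $x\cup a_{23}+a_{12}\cup z$, whose class is the Massey value $\langle x,y,z\rangle_D$. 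Thus $\langle x,y,z\rangle$ contains $0$ if and only if the associated $\bar\rho\colon G_F\to \U_4(\F_p)/Z$ (the quotient by the center $Z$ corresponding to the $(1,4)$ entry) lifts to a homomorphism into $\U_4(\F_p)$. I would state this equivalence explicitly, citing \cite{Dwy}, since it converts the cohomological problem into a Galois-embedding problem.

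First I would set $\chi_1=x$, $\chi_2=y$, $\chi_3=z$ and use the hypotheses $x\cup y=y\cup z=0$ together with $\F_p$-linear independence. The independence guarantees that the three characters cut out an elementary abelian $(\F_p)^3$-extension, which is exactly the non-degeneracy assumption feeding the constructions. Next I would split into the two cases governed by the characteristic. If $\Char(F)\neq p$, I invoke Theorem~\ref{thm:construction char not p} (which itself reduces, after adjoining $\xi$, to Theorem~\ref{thm:construction}); if $\Char(F)=p$, I invoke Theorem~\ref{thm:construction char p}. In either case the theorem produces a Galois $\U_4(\F_p)$-extension $M_0/F$ (resp.\ $M/F$) whose associated surjection $\rho\colon G_F\twoheadrightarrow \U_4(\F_p)$ satisfies $\rho_{12}=\chi_1$, $\rho_{23}=\chi_2$, $\rho_{34}=\chi_3$, as recorded by the explicit generator-to-matrix assignments at the end of each proof. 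This $\rho$ is the desired lift of $\bar\rho$, so the corresponding defining system yields $\langle x,y,z\rangle_D=0$, hence $0\in\langle x,y,z\rangle$.

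The one genuine subtlety—the step I expect to be the main obstacle—is that the existence theorems are stated for a \emph{given} defining-system-independent choice of $a,b,c$ (arising from $\chi_1,\chi_2,\chi_3$), and one must check that the homomorphism $\rho$ manufactured there is compatible with an \emph{arbitrary} prescribed defining system $D$, not merely with some convenient one. The clean way around this is to recall (from the formula $\langle x,y,z\rangle=\langle x,y,z\rangle_D+x\cup H^1(G_F,\F_p)+H^1(G_F,\F_p)\cup z$ quoted just before the proposition) that the Massey product is a \emph{coset}, so it suffices to exhibit a single defining system with value $0$; the full set $\langle x,y,z\rangle$ then automatically contains $0$ regardless of $D$. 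Thus I do not need to match a given $D$, only to produce one, which is exactly what the constructed $\rho$ does. I would therefore phrase the argument as: the lift $\rho$ furnishes a defining system $D_0$ with $\langle\chi_1,\chi_2,\chi_3\rangle_{D_0}=0$, whence $0\in\langle\chi_1,\chi_2,\chi_3\rangle$, completing the proof.
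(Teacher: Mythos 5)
Your proposal is correct and follows essentially the same route as the paper's proof: both invoke Theorems~\ref{thm:construction}, \ref{thm:construction char not p} and~\ref{thm:construction char p} to produce a Galois $\U_4(\F_p)$-extension whose associated surjection $\rho\colon G_F\to \U_4(\F_p)$ satisfies $\rho_{12}=\chi_1$, $\rho_{23}=\chi_2$, $\rho_{34}=\chi_3$, and then conclude via Dwyer's theorem \cite[Theorem 2.4]{Dwy}; your remark that it suffices to exhibit a single defining system with value $0$ is exactly the (correct) reading of the set-valued Massey product that the paper uses implicitly. The only detail you gloss over is the sign convention in Dwyer's correspondence---the paper deduces $0\in\langle -\chi_1,-\chi_2,-\chi_3\rangle$ and then passes to $\langle \chi_1,\chi_2,\chi_3\rangle$---but this is harmless, since negating all three arguments merely negates every Massey value.
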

\begin{proof}Let $L$ be the fixed field of $(F)^s$ under the kernel of the surjection $(\chi_1,\chi_2,\chi_3)\colon G_{F}\to (\F_p)^3$. Then Theorems~\ref{thm:construction}, \ref{thm:construction char not p} and~\ref{thm:construction char p} imply that $L/F$  can be embedded in a Galois $\U_4(\F_p)$-extension $M/F$. Moreover there exist $\sigma_1,\sigma_2,\sigma_3$ in ${\rm Gal}(M/F)$ such that they generate ${\rm Gal}(M/F)$, and 
  \[
  \begin{aligned}
  \chi_1(\sigma_1)=1,   \chi_1(\sigma_2)=0,   \chi_1(\sigma_3)=0;\\
  \chi_2(\sigma_1)=0,   \chi_2(\sigma_2)=1,   \chi_2(\sigma_3)=0;\\
  \chi_3(\sigma_1)=0,   \chi_3(\sigma_2)=0,   \chi_3(\sigma_3)=1.
  \end{aligned}
  \]
  (Note that for each $i=1,2,3$, $\chi_i$ is trivial on ${\rm Gal}(M/M_0)$, hence $\chi_i(\sigma_j)$ makes sense for every $j=1,2,3$.)
An explicit isomorphism $\varphi\colon {\rm Gal}(M/F)\to \U_4(\F_p)$ can be defined as
\[
\sigma_1 \mapsto \begin{bmatrix}
1& 1 & 0 & 0\\
0& 1 & 0 & 0\\
0& 0 & 1 & 0\\
0& 0 & 0 & 1
\end{bmatrix}, \; \;
\sigma_2\mapsto  \begin{bmatrix}
1& 0 & 0 & 0\\
0& 1 & 1 & 0\\
0& 0 & 1 & 0\\
0& 0 & 0 & 1
\end{bmatrix}, \;\;
 \sigma_3\mapsto \begin{bmatrix}
1& 0 & 0 & 0\\
0& 1 & 0 & 0\\
0& 0 & 1 & 1\\
0& 0 & 0 & 1
\end{bmatrix}.
\]
Let $\rho$ be the  composite homomorphism $\rho\colon\Gal_F\to \Gal(M/F)\stackrel{\varphi}{\simeq} \U_4(\F_p)$. Then one can check that
\[
\begin{aligned}
\rho_{12} &=\chi_1,\; \rho_{23}=\chi_2, \; \rho_{34} =\chi_3.
\end{aligned}
\]
(Since  all the maps $\rho,\chi_1,\chi_2,\chi_3$ factor through $\Gal(M/F)$, it is enough to check these equalities on elements  $\sigma_1,\sigma_2,\sigma_3$.) 
This implies that $\langle -\chi_1,-\chi_2,-\chi_3\rangle$ contains 0 by \cite[Theorem 2.4]{Dwy}. Hence $\langle \chi_1,\chi_2,\chi_3\rangle$ also contains 0.
\end{proof}

For the sake of completeness we include the following proposition, which together with Proposition~\ref{prop:nondegenerate Massey}, immediately yields a full new proof for a result which was first proved by E. Matzri \cite{Ma}. Matzri's result says  that defined triple Massey products vanish over all fields containing a primitive $p$-th root of unity. 
Alternative cohomological 
proofs for Matzri's result are in \cite{EMa2} and \cite{MT5}. Our new proof given in this section of the crucial "non-degenerate" part of this result (see Proposition~\ref{prop:nondegenerate Massey}),  which relies on explicit constructions of $\U_4 (\F_p)$-extensions, is a very natural proof because of Dwyer's result \cite[Theorem 2.4]{Dwy}. 
Observe that in \cite{MT5} we extended this result to all fields. 
\begin{prop}
\label{prop:degenerate Massey}
Assume that $\dim_{\F_p}\langle [a]_F,[b]_F,[c]_F\rangle\leq 2$. Then if the triple Massey product $\langle \chi_a,\chi_b,\chi_c\rangle$ is defined, then it contains 0.
\end{prop}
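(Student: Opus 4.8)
The plan is to reduce, by multilinearity, the hypothesis that $\chi_a,\chi_b,\chi_c$ are $\F_p$-linearly dependent to a short list of base configurations, and then to dispatch each configuration directly. Recall first that ``$\langle\chi_a,\chi_b,\chi_c\rangle$ is defined'' unwinds to $\chi_a\cup\chi_b=\chi_b\cup\chi_c=0$, and that the indeterminacy of the product is $\chi_a\cup H^1(G_F,\F_p)+H^1(G_F,\F_p)\cup\chi_c$. The two lemmas already proved, Lemma~\ref{lem:additivity} and Lemma~\ref{lem:scalar Massey}, give additivity and $\F_p$-scalar behaviour in the middle slot; together with the standard transpose symmetry $\langle x,y,z\rangle=-\langle z,y,x\rangle$ for degree-one classes (the two sides being cosets of the same indeterminacy), they also yield the analogous additivity and scalar laws in the two outer slots, whenever the relevant sub-products are defined. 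These are the only formal tools I would use.

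First I would clear the trivial configurations: if any one of $\chi_a,\chi_b,\chi_c$ is $0$, then choosing the corresponding defining cochains to be $0$ produces the value $0$, so $0\in\langle\chi_a,\chi_b,\chi_c\rangle$. Assume now all three are nonzero. A dependence relation then puts us in one of two situations. If the outer pair $\chi_a,\chi_c$ is already dependent, write $\chi_c=\mu\chi_a$; the scalar law in the outer slot gives $\langle\chi_a,\chi_b,\chi_c\rangle=\mu\langle\chi_a,\chi_b,\chi_a\rangle$, reducing us to the outer-equal case. Otherwise $\chi_a,\chi_c$ are independent and dependence forces $\chi_b=s\chi_a+t\chi_c$; here, assuming $p$ odd so that $\chi_a\cup\chi_a=\chi_c\cup\chi_c=0$, expanding $\chi_a\cup\chi_b=0$ and $\chi_b\cup\chi_c=0$ shows $\chi_a\cup\chi_c=0$, so every triple built from $\chi_a,\chi_c$ is defined and the middle additivity and scalar laws give $\langle\chi_a,\chi_b,\chi_c\rangle=s\langle\chi_a,\chi_a,\chi_c\rangle+t\langle\chi_a,\chi_c,\chi_c\rangle$. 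Thus everything reduces to the outer-equal product $\langle x,y,x\rangle$ and the adjacent-equal products $\langle x,x,z\rangle$ and $\langle x,z,z\rangle$.

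The outer-equal case is painless for $p$ odd: transpose symmetry gives $\langle x,y,x\rangle=-\langle x,y,x\rangle$, and since both sides are cosets of the same indeterminacy $x\cup H^1(G_F,\F_p)$, the common value $v$ satisfies $2v\in x\cup H^1(G_F,\F_p)$, hence $v\in x\cup H^1(G_F,\F_p)$ and $0\in\langle x,y,x\rangle$. The genuine obstacle is the adjacent-equal collinear case $\langle x,x,z\rangle$ (and its mirror $\langle x,z,z\rangle$): all pairwise cup products among $x,z$ vanish, but the repeated entries are now adjacent, so transpose symmetry alone gives no cancellation. Here I would argue directly with cochains. Since $x\cup x=0$ and $x\cup z=0$, Proposition~\ref{prop:Heisenberg extension} supplies genuine $\U_3(\F_p)$-representations, hence $1$-cochains $a_{12},a_{23}$ with $\partial a_{12}=x\cup x$ and $\partial a_{23}=x\cup z$; the task is to show that for a suitable such choice the value $x\cup a_{23}+a_{12}\cup z$ lies in the indeterminacy $x\cup H^1(G_F,\F_p)+H^1(G_F,\F_p)\cup z$. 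I expect this collinear base case to be the main difficulty, and I expect the characteristic-$2$ case to need separate treatment, since there $\tfrac12$ is unavailable and the transpose-symmetry shortcut for outer-equal products degenerates (even though $x\cup x=0$ still holds, being part of the hypothesis that the product is defined); that case I anticipate requiring a direct cochain construction rather than the symmetry argument.
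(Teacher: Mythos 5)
Your overall decomposition is closer to the paper's than you might guess: the paper also splits into the case where $\chi_a,\chi_c$ are dependent and the case $\chi_b=\lambda\chi_a+\mu\chi_c$ with $\chi_a,\chi_c$ independent, and in the latter case performs exactly your reduction, via Lemma~\ref{lem:additivity} and Lemma~\ref{lem:scalar Massey}, to the two products $\langle\chi_a,\chi_a,\chi_c\rangle$ and $\langle\chi_a,\chi_c,\chi_c\rangle$. But at that point the paper invokes \cite[Theorem 5.9]{MT5} to conclude that these contain $0$, and it invokes \cite{MT1} for $p=2$; you leave both of these open (you call the first "the main difficulty"), so your proposal is a reduction rather than a proof. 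More seriously, the one case you do claim to close contains a genuine error.

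The transpose symmetry you rely on has the wrong sign. For degree-one classes with trivial $\F_p$-coefficients, and with this paper's conventions, one has $\langle z,y,x\rangle=\langle x,y,z\rangle$ with \emph{no} minus sign. One way to see this is through Dwyer's correspondence \cite{Dwy}: defining systems for $\langle x,y,z\rangle$ are homomorphisms to the quotient of $\U_4(\F_p)$ by its center $Z$ lifting $(x,y,z)$ on the superdiagonal, the value being the obstruction to lifting to $\U_4(\F_p)$; the automorphism $M\mapsto J(M^{-1})^{T}J$ of $\U_4(\F_p)$ (where $J$ is the antidiagonal permutation matrix) followed by conjugation by ${\rm diag}(1,-1,1,-1)$ carries superdiagonal $(x,y,z)$ to $(z,y,x)$ while fixing $Z$ pointwise, hence is a value-preserving bijection between the defining systems of the two products. (A direct cochain computation with the paper's formulas gives the same conclusion.) So the identity $\langle x,y,x\rangle=-\langle x,y,x\rangle$ is unavailable and your "$2v$ lies in the indeterminacy" step collapses. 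In fact no purely formal argument of this kind can work, because the statement it would establish --- that $0\in\langle x,y,x\rangle$ for \emph{every} profinite group whenever the product is defined --- is false: for $G=\Z/3$ and $\chi$ a generator of $H^1(G,\F_3)$, the product $\langle\chi,\chi,\chi\rangle$ is defined, its indeterminacy $\chi\cup H^1(G,\F_3)+H^1(G,\F_3)\cup\chi$ vanishes, and by Kraines' theorem on $p$-fold Massey powers (or a short explicit cochain computation) its unique value is the Bockstein class $-\beta(\chi)\neq 0$. Note also that a one-element set can equal its own negative only if that element is zero, so the asserted anti-symmetry fails in any sign convention. The outer-dependent case therefore requires arithmetic input, and that is what the paper's Case 1 supplies: when $[c]_F\in\langle [a]_F\rangle$, the value of any defining system restricts to zero on $\ker\chi_a$, and \cite[Ch. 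XIV, Prop. 2]{Se} (a class split by the cyclic extension $F(\sqrt[p]{a})/F$ has the form $\chi_a\cup\chi_x$) then places that value inside the indeterminacy. That restriction argument is the missing idea in your outer-dependent case, and \cite[Theorem 5.9]{MT5} is the missing input in your adjacent-equal case.
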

\begin{proof}
We can also assume that $a$, $b$ and $c$ are not in $(F^\times)^p$. The case that $p=2$, was treated in \cite{MT1}. So we shall assume that $p>2$.
\\
\\
{\bf Case 1:} Assume that $a$ and $c$ are linearly dependent modulo $(F^\times)^p$. This case is considered in \cite[Proof of Theorem 4.10]{MT5}. We include a proof here for the convenience of the reader. Let $\varphi=\{\varphi_{ab},\varphi_{bc}\}$ be a defining system for $\langle \chi_a,\chi_b,\chi_c\rangle$. We have
\[
\begin{aligned}
{\rm res}_{\ker \chi_a}(\langle \chi_a,\chi_b,\chi_c\rangle_\varphi)&= \res_{\ker\chi_a}(\chi_a\cup \varphi_{bc}+\varphi_{ab}\cup\chi_c )\\
&= \res_{\ker\chi_a}(\chi_a)\cup \res_{\ker\chi_a}(\varphi_{bc})+ \res_{\ker\chi_a}(\varphi_{ab})\cup\res_{\ker\chi_a}(\chi_c)\\
&= 0\cup \res_{\ker\chi_a}(\varphi_{bc})+ \res_{\ker\chi_a}(\varphi_{ab})\cup 0\\
& = 0.
\end{aligned}
\]
Then \cite[Chapter XIV, Proposition 2]{Se}, $\langle \chi_a,\chi_b,\chi_c\rangle_\varphi=\chi_a\cup\chi_x$ for some $x\in F^\times$. This implies that $\langle \chi_a,\chi_b,\chi_c\rangle$ contains 0.
 \\
\\
{\bf Case 2:} Assume that $a$ and $c$ are linearly independent. Then $[b]_F$ is in $\langle [a]_F,[c]_F\rangle$. Hence there exist $\lambda,\mu\in \F_p$ such that
\[
\chi_b = \lambda\chi_a +\mu\chi_c.
\]
Then we have
\[
\langle \chi_a,\chi_b,\chi_c\rangle = \langle \chi_a,\lambda\chi_a,\chi_c\rangle  +\langle \chi_a,\mu\chi_c,\chi_c\rangle \supseteq  \lambda \langle \chi_a,\chi_a,\chi_c\rangle  +\mu \langle \chi_a,\chi_c,\chi_c\rangle.
\]
(The equality follows from Lemma~\ref{lem:additivity} and the inequality follows from Lemma~\ref{lem:scalar Massey}.) By \cite[Theorem 5.9]{MT5} (see also \cite[Proof of Theorem 4.10, Case 2]{MT5}), $\langle \chi_a,\chi_a,\chi_c\rangle$ and $\langle \chi_a,\chi_c,\chi_c\rangle$ both contain 0. Hence $\langle \chi_a,\chi_b,\chi_c\rangle$ also contains 0.
\end{proof}

\begin{thm}
\label{thm:U4}
 Let $p$ be an arbitrary prime and  $F$ any field. Then the following statements are equivalent. 
 \begin{enumerate}
 \item There exist $\chi_1,\chi_2,\chi_3$ in ${\rm Hom}(G_F,\F_p)$ such that they are $\F_p$-linearly independent, and if ${\rm char} F\not=p$ then $\chi_1 \cup \chi_2 = \chi_2 \cup \chi_3 =0$.
  \item There exists a Galois extension $M/F$ such that ${\rm Gal}(M/F)\simeq \U_4(\F_p)$.
\end{enumerate}
Moreover, assume that (1) holds, and let $L$ be the fixed field of $(F)^s$ under the kernel of the surjection $(\chi_1,\chi_2,\chi_3)\colon G_{F}\to (\F_p)^3$. Then 
in (2) we can construct  $M/F$ explicitly such that  $L$ is embedded in $M$.

If $F$ contains a primitive $p$-th root of unity, then the two above conditions are also equivalent to the following condition.
\begin{enumerate}
\item[(3)] There exist $a,b,c\in F^\times$ such that $[F(\sqrt[p]{a},\sqrt[p]{b}, \sqrt[p]{c}):F]=p^3$ and $(a,b)=(b,c)=0$.
\end{enumerate}
If $F$ of characteristic $p$, then the two above conditions (1)-(2) are also equivalent to the following condition.
\begin{enumerate}
\item[(3')] There exist $a,b,c\in F^\times$ such that $[F(\theta_{a},\theta_{b}, \theta_{c}):F]=p^3$.
\end{enumerate}
 \end{thm}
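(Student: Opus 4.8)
The plan is to treat the equivalence (1) $\Leftrightarrow$ (2) as the heart of the statement and to obtain the auxiliary conditions (3) and (3') as routine dictionary translations of (1) under Kummer theory and Artin--Schreier theory respectively. Essentially all of the genuinely hard work has already been carried out in Theorems~\ref{thm:construction}, \ref{thm:construction char not p} and~\ref{thm:construction char p}, so the present argument is mostly an assembly of those results together with one short cohomological computation.

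For (2) $\Rightarrow$ (1), I would fix a surjection $\rho\colon G_F \surj \Gal(M/F)\simeq \U_4(\F_p)$ and set $\chi_1:=\rho_{12}$, $\chi_2:=\rho_{23}$, $\chi_3:=\rho_{34}$. Since the superdiagonal entries of a product of unipotent upper-triangular matrices simply add, each $\rho_{i,i+1}$ lies in $\Hom(G_F,\F_p)$; composing $\rho$ with the projection of $\U_4(\F_p)$ onto its Frattini quotient $\F_p^3$ shows that $(\chi_1,\chi_2,\chi_3)$ is surjective, so the three characters are $\F_p$-linearly independent. When $\Char F\not=p$, the multiplication rule $\rho_{13}(gh)=\rho_{13}(g)+\rho_{13}(h)+\rho_{12}(g)\rho_{23}(h)$ (and the analogous identity for $\rho_{24}$) shows that $\chi_1\cup\chi_2=-\partial\rho_{13}$ and $\chi_2\cup\chi_3=-\partial\rho_{24}$ are coboundaries, hence vanish in $H^2(G_F,\F_p)$. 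This is exactly the argument of Proposition~\ref{prop:U4 existence}, now read off the matrix entries with no appeal to roots of unity.

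For (1) $\Rightarrow$ (2) I would split on the characteristic. If $\Char F\not=p$, the characters $\chi_1,\chi_2,\chi_3$ are precisely the input required by Theorem~\ref{thm:construction char not p} (the subcase $\xi\in F$ being Theorem~\ref{thm:construction}), so applying that theorem yields a Galois $\U_4(\F_p)$-extension; en route one uses $\chi_1\cup\chi_2=\chi_2\cup\chi_3=0$ to produce $\alpha,\gamma$ with $N_{F(\sqrt[p]{a})/F}(\alpha)=b=N_{F(\sqrt[p]{c})/F}(\gamma)$, and then Lemmas~\ref{lem:operators}, \ref{lem:modification} and~\ref{lem:C1C2} to manufacture $C_1,C_2,\delta$. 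If $\Char F=p$, Artin--Schreier theory lets me choose $a,b,c\in F$ linearly independent modulo $\wp(F)$ with $\chi_a=\chi_1$, $\chi_b=\chi_2$, $\chi_c=\chi_3$; here no cup-product hypothesis is needed, because the relevant $H^2$-obstruction automatically vanishes (the maximal pro-$p$ quotient of $G_F$ is free pro-$p$ by Witt's theorem), and Theorem~\ref{thm:construction char p} builds $M$ directly. In both cases the cited theorems assert that $M$ contains $F(\sqrt[p]{a},\sqrt[p]{b},\sqrt[p]{c})$ (resp.\ $F(\theta_a,\theta_b,\theta_c)$), which is exactly the fixed field $L$ of $\ker(\chi_1,\chi_2,\chi_3)$, yielding the ``Moreover'' clause.

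For the equivalences with (3) and (3') I would invoke the dictionaries recalled in Subsection~\ref{subsec:norm residue} and in the Artin--Schreier review of Section~4. When $\xi\in F$, Kummer theory identifies $[F(\sqrt[p]{a},\sqrt[p]{b},\sqrt[p]{c}):F]=p^3$ with the $\F_p$-linear independence of $\chi_a,\chi_b,\chi_c$, while the interpretation of the norm residue symbol gives $(a,b)=0\Leftrightarrow\chi_a\cup\chi_b=0$ and $(b,c)=0\Leftrightarrow\chi_b\cup\chi_c=0$; combined with the main equivalence this gives (3) $\Leftrightarrow$ (1). When $\Char F=p$, Artin--Schreier theory identifies $[F(\theta_a,\theta_b,\theta_c):F]=p^3$ with the linear independence of $\chi_a,\chi_b,\chi_c$, and since (1) imposes nothing further in this case, (3') $\Leftrightarrow$ (1). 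I expect the main obstacle to be conceptual rather than computational: first, recognizing and justifying that the characteristic-$p$ case legitimately drops the cup-product condition; and second, confirming that the hypotheses ``as in Lemma~\ref{lem:modification}'' built into Theorem~\ref{thm:construction} are genuinely supplied by the chain of Lemmas~\ref{lem:operators}--\ref{lem:C1C2}, so that (1) alone suffices to start the construction machine.
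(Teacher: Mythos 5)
Your proposal is correct and follows essentially the same route as the paper: (1)$\Rightarrow$(2) by invoking Theorems~\ref{thm:construction}, \ref{thm:construction char not p} and~\ref{thm:construction char p}, (2)$\Rightarrow$(1) by reading off $\rho_{12},\rho_{23},\rho_{34}$ and noting that surjectivity onto $(\F_p)^3$ gives linear independence while the homomorphism property makes $\chi_1\cup\chi_2$ and $\chi_2\cup\chi_3$ coboundaries of $\rho_{13}$ and $\rho_{24}$, and (3), (3') via the Kummer and Artin--Schreier dictionaries. The only differences are cosmetic: you spell out the coboundary identity and the Frattini-quotient argument explicitly, where the paper cites Proposition~\ref{prop:U4 existence} and \cite[Lemma 2.6]{MT4}.
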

\begin{proof} The implication that (1) implies (2), follows from Theorems~\ref{thm:construction}, \ref{thm:construction char not p} and~\ref{thm:construction char p}.
\\
\\
Now assume that $(2)$ holds.  Let $\rho$ be the composite $\rho\colon G_F\surj{\rm Gal}(M/F)\simeq \U_4(\F_p)$. Let $\chi_1:=\rho_{12}$, $\chi_2:=\rho_{23}$ and $\chi_3:=\rho_{34}$. Then $\chi_1, \chi_2,\chi_3$ are elements in ${\rm Hom}(G_F,\F_p)$, and $(\chi_1,\chi_2,\chi_3)\colon G_F\to (\F_p)^3$ is surjective. This implies that  $\chi_1,\chi_2,\chi_3$ are $\F_p$-linearly independent by \cite[Lemma 2.6]{MT4}. 

On the other hand, since $\rho$ is a group homomorphism, we see that 
\[
\chi_1\cup \chi_2 =\chi_2\cup\chi_3=0.
\]
Therefore $(1)$ holds.
\\
\\
Now we assume that $F$ contains a primitive $p$-th root of unity. Note that for any $a,b\in F^\times$, $\chi_a\cup\chi_b=0$, if and only if $(a,b)=0$ (see Subsection~\ref{subsec:norm residue}). 
Then (1) is equivalent to (3) by Kummer theory in conjunction with an observation that $[F(\sqrt[p]{a},\sqrt[p]{b}, \sqrt[p]{c}):F]=p^3$, if and only if  $\chi_a,\chi_b,\chi_c$ are $\F_p$-linearly independent.
\\
\\
Now we assume that $F$ of characteristic $p>0$. Then (1) is equivalent to (3') by Artin-Schreier theory in conjunction with an observation that $[F(\theta_{a},\theta_{b}, \theta_{c}):F]=p^3$, if and only if  $\chi_a,\chi_b,\chi_c$ are $\F_p$-linearly independent.
\end{proof}

\end{document}